\documentclass[a4paper,12pt,reqno]{amsart}       
\usepackage{amssymb,amsthm,amsmath}    
\usepackage[english]{babel}   
\usepackage{fullpage}    
\usepackage{hyperref}  

\title[Noncommutative products]
{Noncommutative products of Euclidean spaces} 
%
\date{August 2017; v2 November 2017}

\author{Michel Dubois-Violette, Giovanni Landi}
\address[]{\textit{Michel Dubois-Violette} 
\newline \indent  
Laboratoire de Physique Th\'eorique, UMR 8627,
\newline \indent
Universit\'e Paris XI,
B\^atiment 210, F-91 405 Orsay Cedex}
\email{Michel.Dubois-Violette@u-psud.fr}
\address[]{\textit{Giovanni Landi} 
\newline \indent  
Matematica,
\newline \indent 
Universit\`a di Trieste, 
Via A. Valerio, 12/1, 34127  Trieste, Italy 
\newline \indent
and INFN, Trieste, Italy}
\email{landi@units.it}

\numberwithin{equation}{section}
\newtheorem{theo}{Theorem}[section]
\newtheorem{lemm}[theo]{Lemma}
\newtheorem{prop}[theo]{Proposition}
\newtheorem{defi}[theo]{Definition}
\newtheorem{rema}[theo]{Remark}
\newtheorem{coro}[theo]{Corollary}

\newcommand{\ii}{{\mathrm{i} }}
\newcommand{\dd}{\mathrm{d}}
\newcommand{\nn}{\nonumber}

\newcommand{\ca}{\mathcal{A}}
\newcommand{\cc}{\mathcal{C}}
\newcommand{\cf}{\mathcal{F}}

\newcommand{\ccR}{\mathcal{R}}
\newcommand{\fraca}{{\mathfrak A}}
\newcommand{\fracg}{{\mathfrak g}} 
\newcommand{\IC}{{\mathbb C}}   
\newcommand{\IH}{{\mathbb H}}  
\newcommand{\II}{\mbox{\rm 1\hspace {-.6em} l}}
\newcommand{\IN}{{\mathbb N}}   
\newcommand{\IR}{{\mathbb R}}   
   
\newcommand{\IS}{{\mathbb S}}   
\newcommand{\IT}{{\mathbb T}}   
\newcommand{\IZ}{{\mathbb Z}}   

\newcommand{\Rt}{{\IR^4_\theta}} 
\newcommand{\Ct}{{\IC^2_\theta}} 
\newcommand{\car}{{\ca_R}} 
\newcommand{\cau}{{\ca^\pm_{{\bf u}}}} 
\newcommand{\capo}{{\ca^+_{{\bf u}}}} 
\newcommand{\cane}{{\ca^-_{{\bf u}}}} 

\DeclareMathOperator{\Hom}{Hom}
\DeclareMathOperator{\Ext}{Ext}

\DeclareMathOperator{\SO}{SO}        
\DeclareMathOperator{\SU}{SU}          
\DeclareMathOperator{\su}{\mathfrak{su}}          
\DeclareMathOperator{\U}{U}          
\DeclareMathOperator{\tr}{tr}

\newcommand{\beqa}{\begin{align}}
\newcommand{\eeqa}{\end{align}}
\newcommand{\beq}{\begin{equation}}
\newcommand{\eeq}{\end{equation}}

\begin{document}

\keywords{\newline \indent Noncommutative Euclidean spaces, Noncommutative quaternionic tori, Yang-Baxter equations}
\vskip -1cm
\begin{abstract}
We present natural families of coordinate algebras on noncommutative products of Euclidean spaces 
$\IR^{N_1} \times_{\ccR} \IR^{N_2}$. These coordinate algebras are quadratic ones associated with an
$\ccR$-matrix which is involutive and satisfies the Yang-Baxter equations. 
As a consequence they enjoy a list of nice properties, being regular of finite 
global dimension. Notably, we have eight-dimensional noncommutative euclidean spaces  
$\IR^{4} \times_{\ccR} \IR^{4}$. 
Among these, particularly well behaved ones have deformation parameter  
${\bf u} \in \IS^2$. Quotients include seven-spheres $\IS^{7}_{\bf u}$ as well as noncommutative quaternionic tori 
$\IT^{\IH}_{\bf u} = \IS^3 \times_{\bf u} \IS^3$. 
There is invariance for an action of $\SU(2) \times \SU(2)$ on the torus $\IT^{\IH}_{\bf u}$ in parallel with the action of $\U(1) \times \U(1)$ on a `complex' noncommutative torus $\IT^2_\theta$ which allows one to construct quaternionic toric noncommutative manifolds.  Additional classes of solutions are disjoint from the classical case.
\end{abstract}

\maketitle

\tableofcontents
\parskip = 1 ex

\thispagestyle{empty}

\section{Introduction and motivations}

Noncommutative tori, and in particular the two-torus $\IT^2_\theta$, are the subject of much study that 
continue to produce interesting results. Among other things, they are at the foundation of the  noncommutative toric manifolds of \cite{cl01} and \cite{cdv02}. In the same spirit  of these manifolds, at 
the centre of the present work there is a \emph{noncommutative quaternionic torus} and its properties and 
its role for related \emph{quaternionic toric} noncommutative manifolds. 

It is well known that one can identify the unit quaternions $\IH_1$
with the euclidean three-sphere $\IS^3$. Now, if $\IC_1$ denotes the unit complex number, 
the two-dimensional torus is written as $\IT^2 = \IC_1 \times \IC_1 = \IS^1 \times \IS^1$. 
In analogy with this, and for lack of a better name, we shall call a \emph{quaternionic tori} the cartesian product 
of two copies of $\IH_1$, that is 
$$
\IT^{\IH} = \IH_1 \times \IH_1 \simeq \IS^3 \times \IS^3. 
$$ 

And in analogy with the passage from the torus $\IT^2$ to the noncommutative one $\IT^2_\theta$ 
we shall pass from the torus 
$\IT^{\IH}$ to a noncommutative quaternionic torus 
$$
\IT^{\IH}_{\bf u} = \IH_1 \times_{\bf u} \IH_1 = \IS^3 \times_{\bf u} \IS^3 
$$ 
for a deformation parameter ${\bf u} \in \IS^2$. 
This is defined by duality as having coordinate 
$$
\ca(\IT^{\IH}_{\bf u}) = \ca(\IR^8_{\bf u}) /< \|x_1\|^2 - 1, \|x_2\|^2 -1> , 
$$
with $\ca(\IR^8_{\bf u})$ the coordinate algebra on a eight-dimensional noncommutative euclidean space  
$\IR^8_{\bf u}$ (generated by two sets of hermitian elements $(x_1^\mu, x_2^\alpha)$ with suitable commutation relations) and central quadratic elements $\|x_1\|^2 = \sum_\mu (x_1^\mu)^2$ and $\|x_2\|^2 
= \sum_\alpha (x_2^\alpha)^2$.  

In fact, the noncommutative euclidean space $\IR^8_{\bf u}$ is of the form $\IR^4 \times_{\bf u}\IR^4$, that  is to say it is a noncommutative product of commutative Euclidean spaces. This product space 
is a particular case of a more general construction of noncommutative products of Euclidean spaces $\IR^{N_1} \times_{\ccR} \IR^{N_2}$, for any two natural numbers $N_1$ and $N_2$.  Any such a noncommutative space is defined by duality as having 
coordinate algebra a quadratic algebra $\car$ whose generators $\{x^a\}$ are subject to commutation relations  given by an $\ccR$-matrix, $\ccR=(\ccR^{r \, s}_{a \, b})$, which is involutive and satisfies the Yang-Baxter equations. 
As a consequence the algebra enjoys a list of nice properties, being regular of finite  
global dimension $N_1+N_2$ and Gorenstein. Moreover, it has polynomial growth and the Poincar\'e series of $\car$ and of its Koszul dual algebra $\car^!$ are the classical ones; the algebra $\car$ 
is in fact Calabi--Yau, having a `volume form', that is a nontrivial top dimensional Hochschild cycle. An associated Clifford algebra $\cc\ell(\car)$ is a Koszul nonhomogeneous quadratic algebra with $\car^!$ as homogeneous part, of dimension 
$\dim \cc\ell(\car)=\dim \car^! =2^{N_1+N_2}$. 

\subsubsection{Notation}
We use Einstein convention of summing over repeated up-down indices.
Letters $\lambda, \mu, \nu, \rho, \dots$, from the middle of the Greek alphabet run in $\{1,\dots, N_1\}$ while letters $\alpha, \beta,\gamma, \delta, \dots$, from the beginning of the Greek alphabet run 
in $\{1,\dots, N_2\}$, with $N_1$ and $N_2$ arbitrary natural numbers. By an algebra we always mean an associative algebra and by a graded algebra we mean a $\IN$-graded algebra


\section{A class of regular quadratic algebras}

\subsection{General definitions}
We shall look for a $*$-algebra $\car$ generated by two sets of hermitian elements 
$x=(x_1, x_2)=(x_1^\lambda, x_2^\alpha)$, with $\lambda\in\{1,\dots,N_1\}$ and 
$\alpha\in \{1,\dots,N_2\}$, subject to relations:
\begin{align}\label{alde}
& x_1^\lambda x_1^\mu = x_1^\mu x_1^\lambda, 
\qquad x_2^\alpha x_2^\beta = x_2^\beta x_2^\alpha , \nn \\
& x_1^\lambda x_2^\alpha = R^{\lambda\alpha}_{\beta\mu} \, x_2^\beta x_1^\mu 
\end{align}
for a suitable `matrix' $R^{\lambda \mu}_{\nu \rho}$. 
In view of the hermiticity of the generators $x^\lambda_1, x^\alpha_2$, by taking the conjugate of the relations \eqref{alde},
we are lead to impose that 
\begin{equation}\label{real}
\overline{R}^{\lambda\alpha}_{\beta\mu} R^{\mu\beta}_{\gamma\nu}= \delta^\lambda_\nu \delta^\alpha_\gamma
\end{equation}
with \, $\bar{}$ \, the complex conjugation. 

For the equality \eqref{real} to be valid one needs to assume that the binomials $x_2^\alpha x_1^\lambda$ for any $\lambda, \alpha$, can be taken to be the independent ones (or alternatively one could take the binomials $
x_1^\lambda x_2^\alpha$). Furthermore, since any two components 
$x_1^\lambda$ and $x_1^\mu$ commute, the binomials $x_1^\lambda x_1^\mu$ for $\lambda \geq \mu$ (say) are independent and the same is valid for the binomials $x_2^\alpha x_2^\beta$ now for $\alpha \geq \beta$. 
Alternatively, for these commuting elements one could symmetrise in the indices: 
so the elements $x_1^\lambda x_1^\mu + x_1^\mu x_1^\lambda$ and $x_2^\alpha x_2^\beta + x_2^\beta x_2^\alpha$ are independent. 

Thus $\car$ is a graded quadratic algebra $\car=\oplus_n \car^n$ which is connected, that is 
$\car^0 = \IC\II$; the elements $x^\lambda_1,x^\alpha_2$ form a basis of the component $\car^1$; and  the elements $x^\lambda_1x^\mu_1$ with $\lambda\leq \mu$, $x^\alpha_2 x^\beta_2$ with $\alpha\leq \beta$ and $x^\lambda_1 x^\alpha_2$ (or $x^\alpha_2 x^\lambda_1$) form a basis of $\car^2$.
 
Given any such a quadratic algebra, its Koszul dual $\car^!$ is the quadratic algebra generated by dual 
elements $\theta=(\theta^1, \theta^2)=(\theta^1_\lambda, \theta^2_\alpha)$, with
$\lambda\in\{1,\dots,N_1\}$ and $\alpha\in \{1,\dots,N_2\}$, that is 
$\theta^1_\lambda(x_1^\mu) = \delta^\mu_\lambda$, $\theta^1_\lambda(x_2^\alpha) = 0$ and 
$\theta^2_\alpha(x_1^\lambda) = 0$, $\theta^2_\alpha(x_2^\beta) = \delta_\alpha^\beta$. 
These generators are subject to natural relations which are `orthogonal' to be ones in \eqref{alde}:
\begin{align}\label{aldek}
& \theta^1_\lambda \theta^1_\mu = -\theta^1_\mu \theta^1_\lambda, 
\qquad\theta^2_\alpha \theta^2_\beta = -\theta^2_\beta \theta^2_\alpha , \nn \\
& \theta^2_\beta \theta^1_\mu = -R^{\lambda \alpha}_{\beta \mu} \, \theta^1_\lambda \theta^2_\alpha .
\end{align}

\subsection{An involutive $\ccR$ matrix} 
Let us put together the coordinates by defining 
$x^a$ for $a\in \{1,2, \dots, N_1+N_2\}$ with $x^\lambda=x^\lambda_1$ and 
$x^{\alpha+N_1}=x^\alpha_2$. Then, the commutation relations \eqref{alde} together with the conditions $x^\alpha_2x^\lambda_1=\overline{R}^{\lambda\alpha}_{\beta\mu} x^\mu_1x^\alpha_2$ can be written in the form
 \beq\label{aldeb}
x^a x^b = \ccR^{a \, b}_{c \, d} \,\, x^c x^d . 
\eeq
In view of \eqref{alde} and \eqref{real} we get the following expression for $\ccR^{a \, b}_{c \, d}$, 
\begin{align}\label{rtot}
 \ccR^{\lambda \mu}_{\tau \rho} & = \delta^\lambda_\rho \, \delta^\mu_\tau ,
\qquad  \ccR^{\gamma \delta}_{\alpha \beta} = \delta^\gamma_\beta \, \delta^\delta_\alpha \nn \\ 
\ccR^{\lambda \alpha}_{\beta \mu} & = R^{\lambda \alpha}_{\beta \mu} ,\qquad 
\ccR^{\alpha \lambda}_{\mu \beta} = \overline{R}^{\lambda \alpha}_{\beta \mu} \nn \\
\ccR^{\lambda \mu}_{\alpha \nu} & = \ccR^{\lambda \mu}_{\alpha \beta} = \ccR^{\lambda \mu}_{\nu \beta} =0 , \nn \\
\ccR^{\alpha \gamma}_{\lambda \beta} & = \ccR^{\alpha \gamma}_{\lambda \mu} = \ccR^{\alpha \gamma}_{\beta \mu} = 0 , \nn \\
\ccR^{\lambda \alpha}_{\mu \nu} & =  \ccR^{\lambda \alpha}_{\beta \gamma} 
= \ccR^{\lambda \alpha}_{\mu \beta} =0 , \nn \\
\ccR^{\alpha \lambda}_{\mu \nu} & = \ccR^{\alpha \lambda}_{\beta \gamma} = \ccR^{\alpha \lambda}_{\beta \mu} = 0 .
\end{align}

\begin{lemm}\label{involr}
The $\ccR$ matrix given as in \eqref{rtot} is involutive, that is
$$
\ccR^2 = \II_{N_1} \otimes \II_{N_2}. 
$$
\end{lemm}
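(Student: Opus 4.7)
The plan is to exploit the block structure of $\ccR$ exhibited in \eqref{rtot} and reduce the statement to two easy observations: the flip squares to the identity, and the reality condition \eqref{real} is exactly $\ccR^2 = \II$ on the mixed block.

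First, I would set $V = V_1 \oplus V_2$ with $V_1 = \mathrm{span}\{x_1^\lambda\}$ and $V_2 = \mathrm{span}\{x_2^\alpha\}$, and view $\ccR$ as a linear map $V \otimes V \to V \otimes V$. The last four lines of \eqref{rtot} state that the ``mixed'' entries of $\ccR$ (the ones that would send, say, $V_1 \otimes V_1$ into $V_1 \otimes V_2$, or $V_1 \otimes V_2$ into $V_1 \otimes V_1$) all vanish. Hence $\ccR$ preserves the decomposition
\[
V \otimes V \; = \; (V_1 \otimes V_1) \,\oplus\, (V_2 \otimes V_2) \,\oplus\, (V_1 \otimes V_2) \,\oplus\, (V_2 \otimes V_1),
\]
stabilizing each of the first two summands and exchanging the last two. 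It then suffices to check $\ccR^2 = \mathrm{id}$ on each of these invariant pieces.

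On $V_1 \otimes V_1$ one has $\ccR^{\lambda\mu}_{\tau\rho} = \delta^\lambda_\rho\,\delta^\mu_\tau$, which is exactly the flip $x^\lambda_1 \otimes x^\mu_1 \mapsto x^\mu_1 \otimes x^\lambda_1$; squaring a flip returns the identity. The same argument handles $V_2 \otimes V_2$ using $\ccR^{\gamma\delta}_{\alpha\beta} = \delta^\gamma_\beta\,\delta^\delta_\alpha$. On the mixed piece, the only nonzero blocks are $\ccR^{\lambda\alpha}_{\beta\mu} = R^{\lambda\alpha}_{\beta\mu}$ and $\ccR^{\alpha\lambda}_{\mu\beta} = \overline{R}^{\lambda\alpha}_{\beta\mu}$. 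A direct index computation gives
\[
(\ccR^2)^{\lambda\alpha}_{\nu\gamma} \;=\; \ccR^{\lambda\alpha}_{\beta\mu}\,\ccR^{\beta\mu}_{\nu\gamma} \;=\; R^{\lambda\alpha}_{\beta\mu}\,\overline{R}^{\mu\beta}_{\gamma\nu},
\]
and analogously $(\ccR^2)^{\alpha\lambda}_{\gamma\nu} = \overline{R}^{\lambda\alpha}_{\beta\mu}\,R^{\mu\beta}_{\gamma\nu}$ on $V_2 \otimes V_1$. The second expression is precisely $\delta^\lambda_\nu\,\delta^\alpha_\gamma$ by the reality relation \eqref{real}. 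For the first, I would invoke the fact that a one-sided inverse of a square matrix acting on the finite-dimensional space $V_1 \otimes V_2$ is automatically two-sided, so $R\,\overline{R} = \II$ follows from $\overline{R}\,R = \II$.

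Putting the four blocks together gives $\ccR^2 = \II$ on $V \otimes V$, which is the claim. The main ``obstacle'' is nothing more than bookkeeping of indices and the realization that \eqref{real} really encodes the involutivity on the mixed block; no further ideas are required, since the vanishing conditions in \eqref{rtot} do the heavy lifting of decoupling everything into independent pieces.
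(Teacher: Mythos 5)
Your proof is correct and is essentially the paper's argument: the paper's ``direct computation'' is exactly the block decomposition you spell out, with the flip squaring to the identity on $V_1\otimes V_1$ and $V_2\otimes V_2$, and the reality condition \eqref{real} read as $\overline{R}=R^{-1}$ handling the two mixed blocks. As a minor remark, the one-sided-inverse argument can be bypassed: taking the complex conjugate of \eqref{real} (the Kronecker deltas being real) directly yields $R^{\lambda\alpha}_{\beta\mu}\,\overline{R}^{\mu\beta}_{\gamma\nu}=\delta^\lambda_\nu\,\delta^\alpha_\gamma$, which is the identity you need on the remaining block.
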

\begin{proof} 
This is just a direct computation using the fact that  
$(R^{-1})^{\alpha \lambda}_{ \mu \beta} = \overline{R}^{\lambda \alpha}_{\beta \mu}$ from \eqref{real}. 
\end{proof}

\subsection{The Yang-Baxter condition} 
We next impose that the matrix $\ccR$ satisfies the Yang-Baxter equation, that is it satisfies the equation
\beq\label{rvan}
(\ccR \otimes \II) (\II \otimes \ccR) (\ccR \otimes \II) = (\II \otimes \ccR)(\ccR \otimes \II) (\II \otimes \ccR) .
\eeq
In component this can be written as 
\beq\label{rvan1}
\big( (\ccR \otimes \II) (\II \otimes \ccR) (\ccR \otimes \II) - (\II \otimes \ccR)(\ccR \otimes \II) (\II \otimes \ccR) \big)^{a b c} = 0
\eeq
for indices $a, b, c \in\{1,2, \dots, N_1+N_2\}$.

\begin{prop}
The Yang-Baxter equation \eqref{rvan1} for $\ccR$ is equivalent to the following 
\begin{equation}\label{yb1}
\left\{
\begin{array}{ll}
R^{\lambda\alpha}_{\gamma\rho} R^{\rho\beta}_{\delta\mu} 
= R^{\lambda\beta}_{\delta\rho} R^{\rho\alpha}_{\gamma\mu} & \qquad \mbox{for indices} \qquad (a, b, c)=(\lambda\alpha\beta)\\
\\
\overline{R}^{\lambda\alpha}_{\gamma\rho} \overline{R}^{\rho\beta}_{\delta\mu} 
= \overline{R}^{\lambda\beta}_{\delta\rho}\overline{R}^{\rho\alpha}_{\gamma\mu} & \qquad \mbox{for indices} \qquad (a, b, c)= (\alpha\beta\lambda)\\
\\
\overline{R}^{\lambda\alpha}_{\gamma\rho} R^{\rho\beta}_{\delta\mu} 
= R^{\lambda\beta}_{\delta\rho} \overline{R}^{\rho\alpha}_{\gamma\mu} & \qquad \mbox{for indices} \qquad (a, b, c)= (\alpha\lambda\beta)
\end{array}
\right.
\end{equation}
and
\begin{equation} \label{yb2}
\left\{
\begin{array}{ll}
R^{\lambda\alpha}_{\gamma\nu} R^{\mu\gamma}_{\beta\rho} 
= R^{\mu\alpha}_{\gamma\rho} R^{\lambda\gamma}_{\beta\nu} & \qquad \mbox{for indices} \qquad (a, b, c)= (\lambda\mu\alpha)\\
\\
\overline{R}^{\lambda\alpha}_{\gamma\nu} \overline{R}^{\mu\gamma}_{\beta\rho} 
= \overline{R}^{\mu\alpha}_{\gamma\rho}\overline{R}^{\lambda\gamma}_{\beta\nu} & \qquad \mbox{for indices} \qquad (a, b, c)=(\alpha\lambda\mu)\\
\\
R^{\lambda\alpha}_{\gamma\nu} \overline{R}^{\mu\gamma}_{\beta\rho} 
= \overline{R}^{\mu\alpha}_{\gamma\rho}  R^{\lambda\gamma}_{\beta\nu} & \qquad \mbox{for indices} \qquad (a, b, c)= (\lambda\alpha\mu)
\end{array}
\right.
\end{equation}

\noindent
for the matrices $R^{\lambda\alpha}_{\beta\mu}$ and $\overline{R}^{\lambda\alpha}_{\beta\mu}$.
\end{prop}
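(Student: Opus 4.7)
The plan is to expand both sides of the Yang-Baxter equation \eqref{rvan1} componentwise using the explicit form of $\ccR$ in \eqref{rtot}, and to classify the resulting equations according to the types of the free indices $(a,b,c)$. Each generator $x^a$ is either of type $1$ (when $a=\lambda\in\{1,\dots,N_1\}$) or of type $2$ (when $a=\alpha+N_1$), so the triples $(a,b,c)$ fall into eight sectors indexed by their three types.

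First I would dispose of the two pure sectors $(1,1,1)$ and $(2,2,2)$. By the first line of \eqref{rtot}, the matrix $\ccR$ acts as the flip on $V_1\otimes V_1$ and on $V_2\otimes V_2$. The Yang-Baxter equation for the flip reduces to the trivial braid relation in the symmetric group $S_3$, so these two sectors impose no condition.

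The substantive work is in the six mixed sectors. The key observation is that the vanishing entries listed in the last four lines of \eqref{rtot} force $\ccR$ to preserve the multiset of types of the two tensor factors it acts on: it flips type pairs $(1,1)$ and $(2,2)$ trivially, maps $(1,2)\mapsto(2,1)$ with coefficient $R$, and maps $(2,1)\mapsto(1,2)$ with coefficient $\overline{R}$. Consequently, in each mixed sector only a single string of intermediate types is admissible on each side of \eqref{rvan1}, and the triple summation collapses to a single quadratic monomial in the entries of $R$ and $\overline{R}$. Running through the six mixed sectors one by one then yields exactly the three equalities in \eqref{yb1} from the sectors with one index of type $1$ and two of type $2$, namely $(\lambda\alpha\beta)$, $(\alpha\beta\lambda)$ and $(\alpha\lambda\beta)$, and the three equalities in \eqref{yb2} from the sectors $(\lambda\mu\alpha)$, $(\alpha\lambda\mu)$ and $(\lambda\alpha\mu)$ with two indices of type $1$ and one of type $2$.

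The main obstacle is really just the bookkeeping: for each of the six mixed sectors one must check which intermediate indices are forced to be of which type by the vanishing rules in \eqref{rtot}, then read off the surviving quadratic identity. Lemma \ref{involr} and the reality relation \eqref{real} give useful internal consistency checks, since conjugating one of the three equations in \eqref{yb1} (respectively \eqref{yb2}) and relabelling indices must reproduce another one, so in practice it is enough to verify one representative case in each of the two families and obtain the rest by this symmetry.
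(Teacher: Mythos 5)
Your proposal is correct and is essentially the paper's own argument: the proof in the paper is precisely the direct componentwise computation from \eqref{rtot}, which you organize by decomposing the triples $(a,b,c)$ by type into the two pure sectors (trivial, since the flip satisfies the Yang--Baxter equation) and the six mixed sectors, each of which collapses to one of the six identities in \eqref{yb1} and \eqref{yb2}. One small caveat: complex conjugation plus relabelling only exchanges the first two equations within each family and maps the third (mixed $R\,\overline{R}$) equation to itself, so the suggested shortcut of verifying a single representative per family would not suffice; this does not affect your main route, which checks all six mixed sectors directly.
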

\begin{proof} 
This is just a direct computation by using the expressions \eqref{rtot}. 
\end{proof}

\subsection{Regularity of the algebras} 
From now we assume that the matrix $R$ of relations \eqref{alde} for the algebra $\car$ satisfies conditions \eqref{real}, \eqref{yb1} and \eqref{yb2}, and study the regularities properties of $\car$. 
We refer to Appendix~\ref{quad} for the relevant notions used in this section. 
One has the following result concerning the structure of the algebra $\car$.

\begin{theo}
The algebra $\car$ is Koszul of global dimension $N_1+N_2$ and Gorenstein.
Moreover, it has polynomial growth and the Poincar\'e series of $\car$ and of $\car^!$ are classical, that is one has 
$$
P_{\car}(t) = \frac{1}{(1-t)^{N_1+N_2}}  \qquad \mbox{and} \qquad P_{\car^!}(t) = (1+t)^{N_1+N_2} \, .
$$
\end{theo}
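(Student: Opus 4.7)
The plan is to establish a PBW basis for $\car$ via Bergman's diamond lemma; all the asserted properties will then follow from standard results on Koszul quadratic algebras with a PBW basis (as developed in the appendix referenced earlier). I would totally order the generators as $x_1^1<\cdots<x_1^{N_1}<x_2^1<\cdots<x_2^{N_2}$ and orient the defining relations as left-to-right rewrite rules: $x_1^\mu x_1^\lambda\to x_1^\lambda x_1^\mu$ for $\mu>\lambda$, $x_2^\beta x_2^\alpha\to x_2^\alpha x_2^\beta$ for $\beta>\alpha$, and $x_2^\alpha x_1^\lambda\to \overline{R}^{\lambda\alpha}_{\beta\mu}\, x_1^\mu x_2^\beta$ (the last obtained from \eqref{alde} using the inversion \eqref{real}). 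The candidate normal monomials are then $x_1^{\mu_1}\cdots x_1^{\mu_p}\, x_2^{\alpha_1}\cdots x_2^{\alpha_q}$ with $\mu_1\leq\cdots\leq\mu_p$ and $\alpha_1\leq\cdots\leq\alpha_q$.

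The crux of the argument is verifying confluence at the non-trivial overlap ambiguities $x_2^\alpha x_1^\mu x_1^\lambda$ and $x_2^\beta x_2^\alpha x_1^\lambda$. Reducing each in the two admissible orders and equating the resulting expressions (using the symmetry of $x_1^\rho x_1^\sigma$ and of $x_2^\gamma x_2^\delta$ in the pair of indices) should produce exactly the second identity in \eqref{yb2} and in \eqref{yb1} respectively; the remaining four identities in \eqref{yb1}--\eqref{yb2} encode confluence for the equivalent rewrite system obtained by instead orienting $x_1^\lambda x_2^\alpha$ rightward via \eqref{alde}. The diamond lemma then tells us that the normal monomials form a linear basis of $\car$. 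This confluence check is the only real obstacle.

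A Vandermonde sum gives $\dim\car^n=\sum_{p+q=n}\binom{p+N_1-1}{p}\binom{q+N_2-1}{q}=\binom{n+N_1+N_2-1}{n}$, so $P_\car(t)=(1-t)^{-(N_1+N_2)}$, from which polynomial growth is immediate. The same diamond-lemma argument applied to the Koszul dual relations \eqref{aldek} (the Yang-Baxter consistency is unchanged by the overall signs) produces the PBW basis of $\car^!$ consisting of $\theta^1_{\lambda_1}\cdots\theta^1_{\lambda_p}\theta^2_{\alpha_1}\cdots\theta^2_{\alpha_q}$ with \emph{strictly} increasing indices, yielding $P_{\car^!}(t)=(1+t)^{N_1+N_2}$. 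In particular $(\car^!)^n=0$ for $n>N_1+N_2$ and $(\car^!)^{N_1+N_2}$ is one-dimensional, spanned by $\theta^1_1\cdots\theta^1_{N_1}\theta^2_1\cdots\theta^2_{N_2}$.

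Priddy's theorem then gives Koszulity of $\car$, since a quadratic algebra with a PBW basis is Koszul. For a Koszul algebra the left global dimension equals the largest $n$ for which $(\car^!)^n\neq 0$, namely $N_1+N_2$. Finally, Gorenstein-ness of a Koszul algebra is equivalent to $(\car^!)^{N_1+N_2}$ being one-dimensional together with non-degeneracy of all the multiplication pairings $(\car^!)^k\otimes(\car^!)^{N_1+N_2-k}\to(\car^!)^{N_1+N_2}$, both of which can be read off directly from the PBW basis of $\car^!$.
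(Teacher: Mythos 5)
Your proof takes a genuinely different route from the paper. You establish a PBW basis by Bergman's diamond lemma, with rewrite rules oriented to pull the $x_1$'s to the left using $\overline{R}$, verify confluence at the two overlap types using the $\overline{R}$-only Yang--Baxter identities, and then invoke Priddy's theorem (a quadratic algebra with a PBW basis is Koszul); global dimension, Poincar\'e series and Gorenstein-ness fall out from the explicit bases of $\car$ and $\car^!$. The paper instead appeals to Gurevich's framework of involutive Hecke symmetries: it constructs the $\ccR$-symmetric and $\ccR$-exterior algebras $S_\ccR$ and $\Lambda_\ccR$, puts the Koszul differential $\delta$ and an $\ccR$-exterior differential $\dd$ on $S_\ccR\otimes\Lambda^\bullet_\ccR$, and uses the homotopy identity $(\delta\dd+\dd\delta)\alpha=(k+l)\alpha$ to deduce Koszulity and a Poincar\'e lemma in one stroke, then reads off the Poincar\'e series from the monomial basis and obtains the Gorenstein property by dualizing the Koszul complex. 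The two methods buy different things: the diamond-lemma route is elementary and self-contained and makes it transparent exactly which parts of \eqref{yb1}--\eqref{yb2} are used (only the conjugate pair, which via \eqref{real} is equivalent to the $R$-only pair, so in fact less than the full Yang--Baxter hypothesis); the homotopy route is more conceptual, links the result to the well-developed theory of quantum symmetric/exterior algebras, and hands over the Poincar\'e lemma and the Calabi--Yau structure discussed in the subsequent remark for free.

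Two places in your write-up are left as assertions rather than verifications and should be filled in. First, the phrase ``should produce exactly the second identity in \eqref{yb2} and in \eqref{yb1}'' must be replaced by the actual computation: when you symmetrize the two branches of the reduction of $x_2^\alpha x_1^\mu x_1^\lambda$ in the free pair of $x_1$-indices, you get a condition that is the symmetrization of the second line of \eqref{yb2}, and the unsymmetrized Yang--Baxter identity does imply it (by using the identity once with the indices as given and once with them swapped), but this deserves to be written out; similarly for the $x_2^\beta x_2^\alpha x_1^\lambda$ overlap and the second line of \eqref{yb1}. Second, the Gorenstein argument (``non-degeneracy of all the multiplication pairings $\dots$ can be read off directly from the PBW basis of $\car^!$'') is not automatic: the coefficients in the relations \eqref{aldek} enter when you multiply a basis monomial by its complementary one, and you should check that the resulting multiple of the top monomial $\theta^1_1\cdots\theta^1_{N_1}\theta^2_1\cdots\theta^2_{N_2}$ is never zero. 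This is where invertibility of $R$ (i.e.\ condition \eqref{real}) is used on the Koszul-dual side, and the verification, while not hard, is part of the proof.
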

\begin{proof}
This theorem follows from the more general result of \cite{gu90} concerning the Hecke symmetries since, due to Lemma~\ref{involr} we are in the involutive case, that is the case of a permutation symmetry. Let us explain shortly the method of \cite{gu90} in our case. 

There are two quadratic algebras naturally associated to the matrix  $\ccR$  (satisfying $\ccR^2 = \II \otimes \II$). 
On the one hand, the generalization of the symmetric algebra $S_{\ccR}$ generated by the elements $x^a$ with relations $x^a x^b = \ccR^{a \, b}_{c \, d} \,\, x^c x^d$ which then coincides with $\car$. On the other hand, the generalization of the exterior algebra $\Lambda_{\ccR}$ generated by the elements $(\dd x)^a$ with relations 
$(\dd x)^a (\dd x)^b = - \ccR^{a \, b}_{c \, d} \,\, (\dd x)^c (\dd x)^d$ which then coincides with the dual 
$(\car^!)^\ast$
of the Koszul dual $\car^!$ of $\car$ as graded vector space. 

\noindent
On the free graded left $S_{\ccR}=\car$ module
$$
S_{\ccR} \otimes \Lambda^{\bullet}_{\ccR} 
$$
one has two differentials, the Koszul differential 
$$
\delta :  S^k_{\ccR} \otimes \Lambda^l_{\ccR} \to S^{k+1}_{\ccR} \otimes \Lambda^{l-1}_{\ccR}
$$
and the generalization of the exterior differential 
$$
\dd :  S^k_{\ccR} \otimes \Lambda^l_{\ccR} \to S^{k-1}_{\ccR} \otimes \Lambda^{l+1}_{\ccR}
$$
which satisfy the classical relation
$$
( \delta  \dd + \dd \delta ) \alpha = (k+l) \alpha \qquad \mbox{for} \quad \alpha \in S^k_{\ccR} \otimes \Lambda^l_{\ccR}. 
$$

\noindent
This implies both the koszulity of $S_{\ccR}=\car$ and a generalization of the Poincar\'e lemma. 
By construction, the elements $(x^{a_1} \cdots x^{a_p})$ for $a_1 \leq a_1 \cdots \leq x^{a_p}$, for $p\in \IN$,  
form a homogeneous basis of the graded vector space $\car$ which implies that the latter 
has polynomial growth and more generally, that its Poincar\'e series is given by 
$$
P_{\car}(t) := \sum_{p\in\IN} \dim_{\IC}(\car(p)) \, t^p= \frac{1}{(1-t)^{N_1+N_2}}
$$ 
while the Poincar\'e series of $\car^!$ is $P_{\car^!}(t) := (P_{\car}(-t))^{-1} = (1+t)^{N_1+N_2}$, as stated. (On this hand, the elements $\theta_{a_1}\dots \theta_{a_p}$ for $a_1<\dots<a_p$ and $p\in \{1,\dots,N_1+N_2\}$ form a basis of $\car^!$.) As a consequence the ground field $\IC$ has projective dimension  $N_1+N_2$ which coincides with the global dimension
$$
\mbox{gl}(\car) = N_1+N_2 ,
$$
of $\car$ as for all graded algebras \cite{ca58}, and coincides also with its Hochschild dimension \cite{be05}. 

Next, by applying the functor $\Hom_{\car}(\, \cdot \, , \car)$ to the Koszul chain complex of free left 
$\car$-modules $(\car \otimes (\car^!)^*_\bullet, \delta)$, one obtains a cochain complex of right 
$\car$-modules
the cohomology of which is, by definition $\Ext^\bullet_\car(\IC,\car)$.  It follows from the results above that one has the Gorestein properties:
$$
\Ext^n_\car(\IC,\car) = \delta^n_{N_1+N_2} \, \IC .
$$
This finishes the proof. \end{proof}
\begin{rema}
\textup{
The algebra $\car$ is even a Calabi--Yau algebra as defined in \cite{gi06}. This is because any generator 
of the top one-dimensional space $((\car^!)_{N_1 + N_2})^\ast$ is a graded cyclic potential 
(a graded cyclic pre-regular multilinear form in the sense of \cite{dv07}) for the algebra 
$\car$ (see \cite{dvl17}).
}
\end{rema}

\begin{rema}
\textup{
The methods of \cite{gu90} were also used in \cite{wa93} and we refer to \cite{be00} for an alternative approach connected with the notion of confluence. 
}
\end{rema}

\section{Noncommutative products of Euclidean spaces}

\subsection{Noncommutative product of spaces $\mathbb R^{N_1}$ and $\mathbb R^{N_2}$}
The classical (commutative) solution $R=R_0$ of conditions \eqref{real}, \eqref{yb1} and \eqref{yb2}
is given by 
$$
(R_0)^{\lambda\alpha}_{\beta\mu}=\delta^\lambda_\mu  \delta^\alpha_\beta
$$ 
and the corresponding algebra $\ca_{R_0}$ is the algebra of polynomial functions on the product 
 $\IR^{N_1}\times \IR^{N_2}$. For this reason, we define the \emph{noncommutative product} 
 of $\mathbb R^{N_1}$ and $\mathbb R^{N_2}$  
\beq\label{ncp}
\mathbb R^{N_1}\times_{R} \mathbb R^{N_2}
\eeq
to be the `space dual' to the (coordinate) algebra $\car$ for a general matrix $R$ 
(satisfying the conditions \eqref{real}, \eqref{yb1} and \eqref{yb2}).

\subsection{Noncommutative Euclidean spaces}
With notations as before: the generators are labelled as 
$x^a = (x^\lambda=x^\lambda_1, \,  x^{\alpha+N_1}=x^\alpha_2)$
for $a\in \{1,2, \dots, N_1+N_2\}$ and the relations as in \eqref{alde} (or \eqref{aldeb})
we have the following result. 
\begin{theo} 
The following conditions $\mathrm{(i)}$ $\mathrm{(ii)}$ and $\mathrm{(iii)}$ are equivalent:
\begin{align}\label{cerad}
\mathrm{(i)} & \qquad\qquad \sum^{N_1+N_2}_{a=1} (x^a)^2=\sum^{N_1}_{\lambda=1} (x^\lambda_1)^2 + \sum^{N_2}_{\alpha=1} (x^\alpha_2)^2 \qquad  \mbox{is central in} \quad \car,  \nn \\
\mathrm{(ii)} & \qquad\qquad \sum^{N_1}_{\lambda=1}(x^\lambda_1)^2 \quad \mbox{and} \quad \sum^{N_2}_{\alpha=1}(x^\alpha_2)^2
\qquad  \mbox{are in the center of} \quad  \car, \nn \\
\mathrm{(iii)} & \qquad\qquad \sum^{N_1}_{\lambda=1} R^{\lambda\gamma}_{\beta\nu} 
R^{\lambda\beta}_{\alpha \mu}= \delta^\gamma_\alpha \delta_{\mu\nu} \qquad \mbox{and} \qquad  \sum^{N_2}_{\alpha=1} R^{\lambda\alpha}_{\beta\rho} R^{\rho\alpha}_{\gamma\mu}=\delta^\lambda_\mu \delta_{\beta\gamma}. 
\end{align}
\begin{proof}
Since any two components $x_1^\lambda$ and $x_1^\mu$ commute among themselves, and the same is true for any two components $x_2^\alpha$ and $x_2^\beta$, the equivalence of points $\mathrm{(i)} $ and $\mathrm{(ii)} $ follows.
For the rest one just computes using the defining relations \eqref{alde}
$$
\sum_\lambda (x_1^\lambda)^2 x_2^\gamma = \sum_\lambda x_1^\lambda x_1^\lambda x_2^\gamma = 
\sum_\lambda x_1^\lambda R^{\lambda \gamma}_{\beta \nu} x_2^\beta x_1^\nu 
=\sum_\lambda R^{\lambda \gamma}_{\beta \nu} R^{\lambda \beta}_{\alpha \mu}  \,\, 
x_2^\alpha x_1^\mu x_1^\nu .
$$
Asking that the right-hand side be $\sum_\lambda x_2^\gamma (x_1^\lambda)^2$ yields the first condition in point 
$\mathrm{(iii)}$. The second one is obtained along similar lines. 
\end{proof}
\end{theo}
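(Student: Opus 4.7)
The plan is to split the three-way equivalence into two pieces: an almost formal step for (i)$\Leftrightarrow$(ii) based on internal commutativity within each of the two subalgebras, and a direct $\ccR$-matrix calculation for (ii)$\Leftrightarrow$(iii).

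For (i)$\Leftrightarrow$(ii), the key observation is that $\sum_\mu (x_1^\mu)^2$ lies in the subalgebra generated by the mutually commuting $x_1^\lambda$'s, and so automatically commutes with each $x_1^\nu$; symmetrically $\sum_\alpha (x_2^\alpha)^2$ commutes with every $x_2^\beta$. Centrality in both (i) and (ii) therefore reduces purely to cross commutation between a partial sum and a generator of the opposite type. The total sum $\sum_a (x^a)^2$ commutes with $x_1^\lambda$ iff $\sum_\alpha (x_2^\alpha)^2$ does, and with $x_2^\gamma$ iff $\sum_\mu (x_1^\mu)^2$ does; this is precisely (ii).

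For (ii)$\Leftrightarrow$(iii), I would apply \eqref{alde} twice to push $x_2^\gamma$ past $\sum_\lambda (x_1^\lambda)^2$, yielding
$$
\sum_\lambda (x_1^\lambda)^2 x_2^\gamma = \sum_\lambda R^{\lambda\gamma}_{\beta\nu} R^{\lambda\beta}_{\alpha\mu} \, x_2^\alpha x_1^\mu x_1^\nu,
$$
and compare with $x_2^\gamma \sum_\nu (x_1^\nu)^2 = \delta^\gamma_\alpha \delta_{\mu\nu}\, x_2^\alpha x_1^\mu x_1^\nu$ (after renaming dummy indices). Matching coefficients in the basis of $\car^3$ provided by the linearly independent monomials $\{x_2^\alpha x_1^\mu x_1^\nu\}_{\mu\leq\nu}$ (as spelled out in the earlier subsection) extracts the first identity in \eqref{cerad}. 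The mirror computation, pushing $x_1^\lambda$ through $\sum_\alpha (x_2^\alpha)^2$ via the conjugate relation $x_2^\alpha x_1^\lambda = \overline{R}^{\lambda\alpha}_{\beta\mu} x_1^\mu x_2^\beta$, produces the second identity after eliminating $\overline{R}$ through \eqref{real}.

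The main obstacle is a bookkeeping subtlety: since $x_1^\mu$ and $x_1^\nu$ commute, comparing coefficients in the basis a priori yields only the $(\mu,\nu)$-symmetrised version of the first identity in \eqref{cerad}. Recovering the unsymmetrised identity in its stated form requires the symmetry
$$
\sum_\lambda R^{\lambda\gamma}_{\beta\nu} R^{\lambda\beta}_{\alpha\mu} = \sum_\lambda R^{\lambda\gamma}_{\beta\mu} R^{\lambda\beta}_{\alpha\nu},
$$
which is obtained by setting the two free $1$-type indices equal and tracing the Yang--Baxter relation \eqref{yb2}. This is essentially the only point where the Yang--Baxter hypothesis genuinely enters; the remainder of the argument is routine expansion in the $\ccR$-matrix.
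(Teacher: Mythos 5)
Your proposal is correct and follows essentially the same route as the paper: the (i)$\Leftrightarrow$(ii) reduction via internal commutativity and the double application of \eqref{alde} to normal-order $\sum_\lambda (x_1^\lambda)^2 x_2^\gamma$ against the basis monomials is exactly the paper's computation. Your extra observation --- that coefficient matching a priori yields only the $(\mu,\nu)$-symmetrised identity and that the unsymmetrised form in (iii) is recovered by setting the two $1$-type upper indices equal and tracing the first relation of \eqref{yb2} (the analogous trace of \eqref{yb1} handles the $(\beta,\gamma)$-symmetrisation for the second identity) --- is a correct refinement of a step the paper's terse proof leaves implicit, and your treatment of the second identity via $\overline{R}$ and \eqref{real} is a harmless variant of the paper's ``along similar lines''.
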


We take the matrix $R$ to satisfies \eqref{cerad} also and define the \emph{noncommutative product of the Euclidean spaces} $\IR^{N_1}$ and $\IR^{N_2}$ 
to be dual of the algebra $\car$ with these additional conditions. 
Clearly, the \eqref{cerad} are satisfied by the classical $R=R_0$. The algebra
$\car$ generalizes the algebra of polynomial functions on the product $\IR^{N_1} \times \IR^{N_2}$.
 
The centrality conditions in \eqref{cerad} taken together with the reality conditions \eqref{real} leads to the following conditions on the matrix $R^{\lambda\alpha}_{\beta\mu}$. 
\begin{prop}
By using \eqref{real} and \eqref{cerad} one obtains
\begin{equation}\label{eucl0}
R^{\lambda\beta}_{\alpha\mu}=R^{\mu\alpha}_{\beta\lambda}=(R^{-1})^{\beta\mu}_{\lambda\alpha}
=\overline{R}^{\mu\beta}_{\alpha\lambda} . 
\end{equation}
In turn this implies that relations \eqref{yb1} and \eqref{yb2} reduce to
\begin{equation}\label{eucl1}
R^{\lambda\alpha}_{\beta\rho} R^{\rho\delta}_{\gamma\mu} = R^{\lambda\delta}_{\gamma\rho}R^{\rho\alpha}_{\beta\mu} \qquad \mbox{and} \qquad 
R^{\lambda\alpha}_{\gamma\nu} R^{\mu\gamma}_{\beta\rho} = R^{\mu\alpha}_{\gamma\rho} R^{\lambda\gamma}_{\beta\nu}
\end{equation}
that is the first relation of \eqref{yb1} and the first relation of \eqref{yb2}.
\end{prop}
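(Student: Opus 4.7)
My plan is to pull the last equality in \eqref{eucl0} out of \eqref{real} for free, and to extract the remaining two by combining each of the two identities in \eqref{cerad} with a suitably relabeled form of \eqref{real}, using the invertibility of $R$ (guaranteed by \eqref{real}) to cancel a common right factor.

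First I would note that $(R^{-1})^{\beta\mu}_{\lambda\alpha}=\overline R^{\mu\beta}_{\alpha\lambda}$ is just \eqref{real} with its indices renamed (as is already used in the proof of Lemma~\ref{involr}). For the other two equalities, the key observation is that after a cosmetic swap $\mu\leftrightarrow\nu$ the first condition in \eqref{cerad} reads
\[
\sum_{\lambda,\beta}R^{\lambda\gamma}_{\beta\mu}\,R^{\lambda\beta}_{\alpha\nu}=\delta^\gamma_\alpha\delta^\mu_\nu,
\]
and this shares both its right-hand side and its right factor $R^{\lambda\beta}_{\alpha\nu}$ with a relabeled form of \eqref{real},
\[
\sum_{\lambda,\beta}\overline R^{\mu\gamma}_{\beta\lambda}\,R^{\lambda\beta}_{\alpha\nu}=\delta^\gamma_\alpha\delta^\mu_\nu.
\]
Subtracting these and cancelling the common right factor via $R^{-1}$ yields $R^{\lambda\alpha}_{\beta\mu}=\overline R^{\mu\alpha}_{\beta\lambda}$. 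Applying the identical scheme to the second centrality condition produces the companion identity $R^{\lambda\alpha}_{\beta\mu}=\overline R^{\lambda\beta}_{\alpha\mu}$. Taking complex conjugates of these two derived identities and combining them gives $R^{\lambda\beta}_{\alpha\mu}=R^{\mu\alpha}_{\beta\lambda}$, which is the first equality of \eqref{eucl0}, while a relabeling of the first derived identity gives $R^{\mu\alpha}_{\beta\lambda}=\overline R^{\mu\beta}_{\alpha\lambda}$, thereby closing the chain.

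For the reduction of \eqref{yb1} and \eqref{yb2} to \eqref{eucl1}, the second line of each system is literally the complex conjugate of the first and hence automatic. For the third (mixed) line of \eqref{yb1} I would use the conjugate form $\overline R^{\lambda\alpha}_{\beta\mu}=R^{\lambda\beta}_{\alpha\mu}$ of the first derived identity to replace every $\overline R$ by $R$ with its two $N_2$-type indices swapped; a direct check shows that line three then becomes line one with $\alpha\leftrightarrow\gamma$. For \eqref{yb2} the companion form $\overline R^{\lambda\alpha}_{\beta\mu}=R^{\mu\alpha}_{\beta\lambda}$ does the analogous job, swapping the two $N_1$-type indices instead. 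The main obstacle throughout is really just the index bookkeeping: the crucial trick is spotting the dummy relabeling that places a centrality condition and the corresponding avatar of \eqref{real} into a shape allowing cancellation; once this is seen, invertibility of $R$ and a few relabelings do the rest.
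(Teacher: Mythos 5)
Your proposal is correct and is essentially the paper's own argument: both proofs compare each centrality sum in point (iii) of \eqref{cerad} with a relabeled form of \eqref{real} and use the invertibility of $R$ (uniqueness of the inverse, i.e.\ cancelling the common factor) to obtain the pointwise identities of \eqref{eucl0}, after which the second lines of \eqref{yb1}--\eqref{yb2} are conjugates of the first and the mixed third lines reduce to the first by the swap identities — you merely spell out what the paper calls ``evident''. One cosmetic slip: the identities $\overline{R}^{\lambda\alpha}_{\beta\mu}=R^{\lambda\beta}_{\alpha\mu}$ and $R^{\mu\alpha}_{\beta\lambda}=\overline{R}^{\mu\beta}_{\alpha\lambda}$ that you attribute to your \emph{first} derived identity are in fact the conjugate/relabeled form of your \emph{second} one, but since both identities are established the argument is unaffected.
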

\begin{proof}
We know from \eqref{real} that $(R^{-1})^{\beta\mu}_{\lambda\alpha}
=\overline{R}^{\mu\beta}_{\alpha\lambda}$. When comparing the first sum in the point (iii) 
of \eqref{cerad} with \eqref{real}, we see there is a `transposition'in the indices $\lambda, \mu$
and this leads to $R^{\lambda\beta}_{\alpha\mu}=(R^{-1})^{\beta\mu}_{\lambda\alpha}$. Similarly,
a comparison of the second sum in the point (iii) of \eqref{cerad} with \eqref{real}, shows 
a `transposition' in the indices $\alpha, \beta$ and this gives
$R^{\mu\alpha}_{\beta\lambda}=(R^{-1})^{\beta\mu}_{\lambda\alpha}$. 
The fact that then the relations \eqref{yb1} and \eqref{yb2} reduce to the two in \eqref{eucl1} is evident. 
\end{proof}

\begin{coro}
\bigskip
The relations \eqref{alde} define (the algebra of a) noncommutative product of a $N_1$-dimensional with a 
$N_2$-dimensional Euclidean spaces if and only if the matrix $R^{\lambda\alpha}_{\beta\mu}$ satisfy 
relations \eqref{eucl0} and \eqref{eucl1}.

\end{coro}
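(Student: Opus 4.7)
The statement is an equivalence, so I would handle the two implications separately. The forward direction, namely that a noncommutative product of Euclidean spaces yields a matrix $R$ satisfying \eqref{eucl0} and \eqref{eucl1}, is already contained in the proposition just proved: by the definition given immediately before, such an algebra has $R$ satisfying the reality \eqref{real}, the Yang-Baxter conditions \eqref{yb1}--\eqref{yb2}, and the centrality \eqref{cerad}, and the proposition deduces \eqref{eucl0} together with the reduction of \eqref{yb1}--\eqref{yb2} to the two relations \eqref{eucl1}.

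For the converse direction, assume that $R$ satisfies \eqref{eucl0} and \eqref{eucl1}, and I would recover each of the previously listed conditions in turn. First, the chain of equalities in \eqref{eucl0} contains both $R^{\lambda\beta}_{\alpha\mu}=\overline{R}^{\mu\beta}_{\alpha\lambda}$ and $R^{\lambda\beta}_{\alpha\mu}=(R^{-1})^{\beta\mu}_{\lambda\alpha}$; eliminating $R$ yields $\overline{R}^{\mu\beta}_{\alpha\lambda}=(R^{-1})^{\beta\mu}_{\lambda\alpha}$, which is exactly the reality condition \eqref{real} after a relabeling of indices.

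Next, among the six Yang-Baxter relations in \eqref{yb1}--\eqref{yb2}, the first equation in each group is literally \eqref{eucl1}. For each of the four remaining relations I would use the identity $\overline{R}^{\lambda\alpha}_{\beta\mu}=R^{\mu\beta}_{\alpha\lambda}$ from \eqref{eucl0} to rewrite every occurrence of $\overline{R}$ as an $R$ with transposed indices; after this substitution and a straightforward renaming of the summed indices the equation reduces to one of the two relations \eqref{eucl1}, and therefore holds by assumption.

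Finally, to recover the centrality condition \eqref{cerad}, I would exploit \eqref{eucl0} one more time to write $R^{\lambda\gamma}_{\beta\nu}=\overline{R}^{\nu\gamma}_{\beta\lambda}$, so that
\[
\sum_{\lambda}R^{\lambda\gamma}_{\beta\nu}\,R^{\lambda\beta}_{\alpha\mu}
=\sum_{\lambda}\overline{R}^{\nu\gamma}_{\beta\lambda}\,R^{\lambda\beta}_{\alpha\mu}
=\delta^\nu_\mu\,\delta^\gamma_\alpha
\]
by the reality \eqref{real} verified in the previous step; the second identity of point (iii) in \eqref{cerad} is obtained by the analogous manipulation with the roles of the two sets of indices swapped. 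The only real obstacle in this argument is careful bookkeeping of the several types of up/down indices attached to $R$ and $\overline{R}$; there is no conceptual difficulty beyond reading off the symmetries built into \eqref{eucl0}.
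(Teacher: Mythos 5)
Your argument is correct and follows essentially the same route as the paper: the forward implication is exactly the preceding Proposition, and your converse merely makes explicit the reduction that the paper declares evident, using the symmetries $\overline{R}^{\mu\beta}_{\alpha\lambda}=R^{\lambda\beta}_{\alpha\mu}=R^{\mu\alpha}_{\beta\lambda}$ encoded in \eqref{eucl0} to reconstruct \eqref{real}, the remaining Yang--Baxter relations of \eqref{yb1}--\eqref{yb2}, and the centrality conditions \eqref{cerad}. The index manipulations you outline (conjugation amounting to a transposition of either pair of indices, then relabelling the summed indices) do check out, so no gap remains.
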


 \subsection{Noncommutative spheres and products of spheres}
 With the quadratic elements $\| x_1 \|^2 = \sum^{N_1}_{\lambda=1}(x^\lambda_1)^2$ and 
$\| x_2 \|^2 = \sum^{N_2}_{\alpha=1}(x^\alpha_2)^2$ of $\car$ being central, one may consider the quotient algebra
$$
\car / (\{ \| x_1 \|^2-\II,  \| x_2 \|^2-\II\})
$$
which defines by duality the noncommutative product 
$$
\IS^{N_1-1} \times_R \IS^{N_2-1}
$$
of the classical spheres $\IS^{N_1-1}$ and $\IS^{N_2-1}$. Indeed, for $R=R_0$, the above quotient is the restriction to $\IS^{N_1-1}\times \IS^{N_2-1}$ of the polynomial functions on 
$\IR^{N_1+N_2}$. 

Furthermore, with the central quadratic element $\| x \|^2 = \sum^{N_1+N_2}_{a=1} (x^a)^2= \| x_1 \|^2 +  \| x_2 \|^2$,
one may also consider the quotient of $\car$
$$
\car/( \| x \|^2-\II).
$$
This defines (by duality) the noncommutative $(N_1+N_2-1)$-sphere 
$$
\IS^{N_1+N_2-1}_R
$$
(a `subspace' of the noncommutative product of $\IR^{N_1} \times_R \IR^{N_2}$) which will be shown \cite{dvl17} to be a noncommutative spherical manifold in the sense of \cite{cl01,cdv02}.

\section{Clifford algebras}
\begin{defi}
The (generalised) Clifford algebra $\cc\ell(\car)$ is the algebra generated by elements 
$\Gamma=(\Gamma^1_\mu$, $\Gamma^2_\alpha)$, 
for $\lambda\in\{1,\dots,N_1\}$ and 
$\alpha\in \{1,\dots,N_2\}$, subject to relations
\begin{align}\label{aldecl}
& \Gamma^1_\lambda \Gamma^1_\mu + \Gamma^1_\mu \Gamma^1_\lambda 
= 2 \delta_{\lambda \mu} \II , 
\qquad \Gamma^2_\alpha \Gamma^2_\beta + \Gamma^2_\beta \Gamma^2_\alpha 
= 2 \delta_{\alpha \beta} \II , \nn \\
& \Gamma^2_\beta \Gamma^1_\mu + R^{\lambda \alpha}_{\beta \mu} \, 
\Gamma^1_\lambda \Gamma^2_\alpha = 0.
\end{align}
\end{defi}
With the definitions \eqref{alde} and \eqref{aldecl}, the following two (equivalent) propositions do not depend on the explicit choice of the matrix $R^{\lambda \mu}_{\nu \rho}$.

\begin{prop}
In the algebra $\cc\ell(\car) \otimes \car$ one has that
\begin{align*}
(\Gamma^1_\lambda \otimes x_1^\lambda)^2 & = \II \otimes \sum_\lambda (x_1^\lambda)^2 = \II \otimes \|x_1\|^2,  \nn \\
(\Gamma^2_\alpha \otimes x_2^\alpha)^2 & = \II \otimes \sum_\alpha (x_2^\alpha)^2 = \II \otimes \|x_2\|^2 , \nn \\
(\Gamma^1_\lambda \otimes x_1^\lambda)(\Gamma^2_\alpha \otimes x_2^\alpha ) & + 
(\Gamma^2_\alpha \otimes x_2^\alpha )(\Gamma^1_\lambda \otimes x_1^\lambda) = 0 .
\end{align*}
\begin{proof}
Since the components $x_1^\lambda$ commute among themselves, one
computes
$$
(\Gamma^1_\lambda \otimes x_1^\lambda)^2 
= \Gamma^1_\lambda \Gamma^1_\mu \otimes x_1^\lambda x_1^\mu 
= \tfrac{1}{2} ( \Gamma^1_\lambda \Gamma^1_\mu + \Gamma^1_\mu \Gamma^1_\lambda) 
\otimes x_1^\lambda x_1^\mu = \II \otimes \delta_{\lambda \mu} x_1^\lambda x_1^\mu = \II \otimes \|x_1\|^2, 
$$
having used the first relation in \eqref{aldecl}. A similar computation works for the quadratic element 
$\|x_2\|^2$. As for the last identity, 
one computes:
\begin{align*}
\Gamma^1_\lambda \Gamma^2_\alpha \otimes x_1^\lambda x_2^\alpha 
+ \Gamma^2_\beta \Gamma^1_\mu \otimes x_2^\beta x_1^\mu
= \Gamma^1_\lambda \Gamma^2_\alpha \otimes R^{\lambda \alpha}_{\beta \mu} \, x_2^\beta x_1^\mu
-R^{\lambda \alpha}_{\beta \mu} \, \Gamma^1_\lambda \Gamma^2_\alpha 
\otimes x_2^\beta x_1^\mu = 0
\end{align*}
using now the last ones of the conditions \eqref{alde} and \eqref{aldecl}.  
\end{proof}
\end{prop}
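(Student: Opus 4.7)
The plan is to verify each of the three identities by direct computation in $\cc\ell(\car) \otimes \car$, exploiting the fact that the Clifford relations \eqref{aldecl} are engineered in parallel with the coordinate relations \eqref{alde} so that the deformation matrix $R$ cancels out by construction.

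For the first square, I would expand
\[
(\Gamma^1_\lambda \otimes x_1^\lambda)^2 = \Gamma^1_\lambda \Gamma^1_\mu \otimes x_1^\lambda x_1^\mu
\]
and use the commutativity $x_1^\lambda x_1^\mu = x_1^\mu x_1^\lambda$ from \eqref{alde} to symmetrize the Clifford factor in $(\lambda,\mu)$. The first relation in \eqref{aldecl} then collapses the symmetrized Clifford product to $\delta_{\lambda\mu}\II$, leaving $\II \otimes \sum_\lambda (x_1^\lambda)^2 = \II \otimes \|x_1\|^2$. The analogous identity for $x_2$ goes through by the same mechanism, using the second relation in \eqref{aldecl} together with commutativity of the $x_2^\alpha$ among themselves.

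For the vanishing of the mixed anticommutator, I would expand both summands and apply the commutation relation $x_1^\lambda x_2^\alpha = R^{\lambda\alpha}_{\beta\mu}\, x_2^\beta x_1^\mu$ from \eqref{alde} to rewrite the first summand. Both terms then share the common coordinate factor $x_2^\beta x_1^\mu$, and the sum becomes
\[
\bigl( R^{\lambda\alpha}_{\beta\mu}\, \Gamma^1_\lambda \Gamma^2_\alpha + \Gamma^2_\beta \Gamma^1_\mu \bigr) \otimes x_2^\beta x_1^\mu,
\]
which is identically zero by the last of the Clifford relations in \eqref{aldecl}.

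There is essentially no real obstacle here: the content of the statement is precisely that the mixed Clifford relation in \eqref{aldecl} is the algebraic shadow of the mixed coordinate relation in \eqref{alde}, which is why nothing in the proof depends on the explicit entries of $R^{\lambda\alpha}_{\beta\mu}$ (nor even on the Yang--Baxter or centrality conditions). The only care required is careful bookkeeping of the Einstein summations when relabelling dummy indices $(\lambda,\alpha) \leftrightarrow (\mu,\beta)$ so that the $R$-matrix from the coordinate side matches the $R$-matrix on the Clifford side with the correct sign.
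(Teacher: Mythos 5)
Your proof is correct and follows essentially the same route as the paper: expand, use commutativity of the $x_1^\lambda$ (resp.\ $x_2^\alpha$) to symmetrize the Clifford factor for the two squares, and for the mixed anticommutator rewrite $x_1^\lambda x_2^\alpha$ via the $R$-matrix so that the coordinate factor $x_2^\beta x_1^\mu$ factors out and the Clifford relation $\Gamma^2_\beta\Gamma^1_\mu + R^{\lambda\alpha}_{\beta\mu}\Gamma^1_\lambda\Gamma^2_\alpha = 0$ kills the sum. Your closing remark that nothing depends on the explicit form of $R$ (nor on Yang--Baxter or centrality) is also exactly the observation the paper makes just before stating this proposition.
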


\begin{prop}
In the algebra $\cc\ell(\car) \otimes \car$, one has that 
\beq\label{gsxs}
(\Gamma(x))^2 = \II \otimes \|x\|^2 .
\eeq
where 
$$
\Gamma(x) := \Gamma^1_a \otimes x^a = \Gamma^1_\lambda \otimes x_1^\lambda 
+ \Gamma^2_\alpha \otimes x_2^\alpha 
$$

\noindent
and $\|x\|^2 = \sum_a (x^a)^2 = \|x_1\|^2 + \|x_2\|^2$
the square norm of $x=(x_1,x_2)$. 
\begin{proof}
One just computes:
\begin{align*}
(\Gamma(x))^2 & = (\Gamma^1_\lambda \otimes x_1^\lambda)^2 + (\Gamma^2_\lambda \otimes 
x_2^\lambda)^2 + 
(\Gamma^1_\lambda \otimes x_1^\lambda)(\Gamma^2_\alpha \otimes x_2^\alpha )  + 
(\Gamma^2_\alpha \otimes x_2^\alpha )(\Gamma^1_\lambda \otimes x_1^\lambda) \\
& =  \II \otimes \|x_1\|^2  + \II \otimes \|x_2\|^2 = \II \otimes \|x\|^2 
\end{align*}
using the same computation as in the previous proposition. 
\end{proof}
\end{prop}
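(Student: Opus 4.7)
The plan is to reduce the claim to the three identities of the preceding proposition by a one-line bilinear expansion. Write $\Gamma(x) = A + B$ with
$$
A := \Gamma^1_\lambda \otimes x_1^\lambda \qquad \text{and} \qquad B := \Gamma^2_\alpha \otimes x_2^\alpha.
$$
Then, without assuming any commutativity between $A$ and $B$, the square decomposes as
$$
(\Gamma(x))^2 = A^2 + AB + BA + B^2.
$$
The previous proposition supplies exactly the three pieces needed: $A^2 = \II \otimes \|x_1\|^2$, $B^2 = \II \otimes \|x_2\|^2$, and $AB + BA = 0$. Summing and using $\|x\|^2 = \sum_a (x^a)^2 = \|x_1\|^2 + \|x_2\|^2$ gives the stated identity.

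The step I expect to be the main (and essentially only) work is really no work at all in the present formulation: all the content has already been packaged into the three bilinear identities of the previous proposition, which in turn encode the defining relations \eqref{alde} of $\car$ and \eqref{aldecl} of $\cc\ell(\car)$ (together with the commutativity within each of the two sets of coordinates). In particular, the crucial cancellation in $AB + BA$ is precisely where the cross relation $\Gamma^2_\beta \Gamma^1_\mu = -R^{\lambda\alpha}_{\beta\mu}\Gamma^1_\lambda \Gamma^2_\alpha$ meets $x_1^\lambda x_2^\alpha = R^{\lambda\alpha}_{\beta\mu} x_2^\beta x_1^\mu$, and it is striking that no properties of $R$ beyond these defining relations are needed.

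Thus the proposition should be viewed as a repackaging result: it turns three quadratic identities into the single Clifford-type identity $(\Gamma(x))^2 = \II \otimes \|x\|^2$ that one expects when treating $x = (x_1, x_2)$ as a noncommutative vector and $\Gamma(x)$ as the associated contracted gamma element. Accordingly, my written proof would be the three-line expansion above with a pointer to the three items of the previous proposition, and no further calculation.
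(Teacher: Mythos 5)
Your proof is correct and is essentially identical to the paper's: expand $(\Gamma(x))^2 = A^2 + AB + BA + B^2$ and invoke the three identities of the preceding proposition for the two squares and the anticommutator. Nothing is missing and no different route is taken.
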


The algebra $\cc\ell(\car)$ is a nonhomogeneous quadratic algebra with $\car^!$ as homogeneous quadratic part (see Appendix~\ref{quad}). It is not $\IN$-graded but only $\IZ_2$-graded and filtered with
$$
\cf ^n=F^n(\cc\ell(\car))=\{ \mbox{elements of degree in}\  \Gamma\leq n\}
$$
One has a surjective canonical homomorphism of graded algebras
\beq \label{canc}
\mbox{can} : \car^!\rightarrow\ \mbox{gr}(\cc\ell(\car))=\oplus_{n\in\IN} \, \cf^n/\cf^{n-1}
\eeq
which induce the isomorphism of vector spaces
$$
(\car^!)^1\simeq \cf^1/\cf^0
$$
The algebra $\cc\ell(\car)$ has the following Poincar\'e--Birkhoff--Witt (PBW) property: 
\begin{prop}\label{PBW}
The homomorphism \eqref{canc} is an isomorphism of graded algebras.
\end{prop}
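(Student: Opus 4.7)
The plan is to recognise $\cc\ell(\car)$ as a nonhomogeneous quadratic algebra whose homogeneous quadratic part is precisely $\car^!$, and then apply the PBW criterion for such algebras (of Braverman--Gaitsgory/Polishchuk--Positselski type, whose formulation is recorded in Appendix~\ref{quad}). Setting $V = V_1 \oplus V_2 = \mathrm{span}\{\Gamma^1_\lambda\} \oplus \mathrm{span}\{\Gamma^2_\alpha\}$, the defining relations \eqref{aldecl} are of the form $\rho - \gamma(\rho)$, where $\rho$ runs over the space $R \subset V \otimes V$ of quadratic relations of $\car^!$ (compare \eqref{aldek} with \eqref{aldecl}), and $\gamma: R \to \IC$ is a constant shift that vanishes on the mixed cross relation and takes the values $2\delta_{\lambda\mu}$ and $2\delta_{\alpha\beta}$ on the symmetric relations. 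Notably, the linear shift is zero.

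Since $\car^!$ is Koszul (established in the previous section), the PBW criterion collapses to a single consistency condition: that the map $\gamma\otimes\id - \id\otimes\gamma$ vanishes on the intersection $(R\otimes V)\cap(V\otimes R)\subset V^{\otimes 3}$ when viewed as taking values in $V$. Geometrically this asks that the scalar ambiguities produced by the two ways of reducing a cubic monomial via the quadratic relations agree. I would verify this on the bigrading of $V^{\otimes 3}$ induced by $V = V_1 \oplus V_2$. In the pure-type components $V_1^{\otimes 3}$ and $V_2^{\otimes 3}$ the check reduces to the classical PBW property of an ordinary Clifford algebra built over a commutative polynomial algebra and is standard. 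In the mixed-type components, where the cross relation intervenes, the fact that the cross relation carries no constant shift means the only nontrivial contributions come from pairs of symmetric relations; these are to be matched by invoking the Yang--Baxter identities \eqref{yb1}--\eqref{yb2} together with the reality condition \eqref{real}.

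The main obstacle will be the bookkeeping in the mixed-type cases, where one must trace carefully how applying the cross relation twice in different orders produces sums of matrix elements $R^{\lambda\alpha}_{\beta\mu}$ that must then collapse, via Yang--Baxter and involutivity, to the diagonal $\delta$'s arising from the symmetric relations. Once these identities are verified, the PBW theorem delivers $\mathrm{can}: \car^! \to \mathrm{gr}(\cc\ell(\car))$ as an isomorphism of graded algebras, whence $\dim \cc\ell(\car) = \dim \car^! = 2^{N_1+N_2}$. An alternative route that would finesse the combinatorics is to exhibit an explicit faithful representation of $\cc\ell(\car)$ on a $\IZ_2$-graded spinor space of dimension $2^{N_1+N_2}$, yielding PBW by a dimension count; however, the direct nonhomogeneous Koszul deformation approach is expected to be cleaner and to mesh better with the framework of Appendix~\ref{quad}.
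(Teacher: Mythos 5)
Your framework is the same as the paper's: regard $\cc\ell(\car)$ as a nonhomogeneous quadratic algebra with quadratic part $\car^!$ and with $\psi_1=0$ in \eqref{PR}, use the Koszulity of $\car^!$, and reduce the PBW property to the consistency conditions \eqref{i-ii}, which with vanishing linear part amount to requiring that $\psi_0\otimes\id-\id\otimes\psi_0$ annihilate $(R^\perp\otimes E^\ast)\cap(E^\ast\otimes R^\perp)$. The gap is in how you propose to check this in the mixed sectors: you assert that the matching of the two reduction orders follows from the Yang--Baxter identities \eqref{yb1}--\eqref{yb2} together with the reality condition \eqref{real}, and that is false. Concretely, reducing $\Gamma^2_\beta\Gamma^1_\mu\Gamma^1_\nu$ by first anticommuting $\Gamma^1_\mu\Gamma^1_\nu$ produces the constant contribution $2\delta_{\mu\nu}\Gamma^2_\beta$, while first pushing $\Gamma^2_\beta$ through both $\Gamma^1$'s with the cross relation and then anticommuting produces $2\sum_\lambda R^{\lambda\alpha}_{\beta\mu}R^{\lambda\gamma}_{\alpha\nu}\,\Gamma^2_\gamma$; equality of these (and of the analogous expressions in the sector with two $\Gamma^2$'s) is exactly condition (iii) of \eqref{cerad}, equivalently \eqref{eucl0}, i.e.\ the centrality of $\|x_1\|^2$ and $\|x_2\|^2$ in $\car$. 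This is an \emph{additional} hypothesis, imposed in Section~3 to define Euclidean products, and it does not follow from \eqref{real}, \eqref{yb1}, \eqref{yb2}: for $N_1=N_2=1$ and $R=e^{\ii\theta}$ all of those hold, yet in the corresponding Clifford algebra $\Gamma_2=\Gamma_2\Gamma_1\Gamma_1=e^{2\ii\theta}\Gamma_1\Gamma_1\Gamma_2=e^{2\ii\theta}\Gamma_2$, so the algebra collapses and PBW fails unless $e^{2\ii\theta}=1$.

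Once you replace ``Yang--Baxter plus reality'' by the centrality conditions \eqref{cerad}, your computation can be closed, but you can also bypass the bookkeeping entirely, which is what the paper does: since $\psi_0$ is the pairing against the symmetric tensor $\sum_a x^a\otimes x^a$, the single consistency condition $(\psi_0\otimes\id-\id\otimes\psi_0)\big((R^\perp\otimes E^\ast)\cap(E^\ast\otimes R^\perp)\big)=0$ is precisely dual to the statement that $\sum_a x^a\otimes x^a\otimes y-y\otimes\sum_a x^a\otimes x^a\in R\otimes E+E\otimes R$ for every $y\in E$, i.e.\ to the centrality of $\|x\|^2=\sum_a(x^a)^2$ in the quadratic algebra $\car$, which is the standing Euclidean hypothesis. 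So the correct statement of where the nontrivial input lies is: Koszulity of $\car^!$ plus centrality of $\|x\|^2$, not the Yang--Baxter equations, which by themselves only guarantee the good homogeneous properties of $\car$ and $\car^!$.
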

\begin{proof}
With respect to the results recalled in Appendix~\ref{quad}, now in \eqref{PR} one has $\psi_1=0$.
Since $\car^!$ is Koszul one needs to show that the two conditions (i) and (ii) in \eqref{i-ii} are satisfied. 
This is easily established being the element $\|x\|^2 = \sum_a (x^a)^2$ central in $\car$. 
\end{proof}

As a consequence of this isomorphism, the koszulity of $\car^!$ yields that
the nonhomogeneous quadratic algebra $\cc\ell(\car)$ is Koszul as well \cite{bg06} (see also \cite{dv14}). In turn, this implies that
$\dim \cc\ell(\car)=\dim \car^! =2^{N_1+N_2}$. 
Finally, if $\cc\ell(N)$ denotes the usual Clifford algebra of the euclidean space $\IR^N$, 
one has isomorphisms:
$$
\left\{
\begin{array}{l}
\cc\ell(N_1)\simeq\ \mbox{subalgebra of}\ \cc\ell(\car)\ \mbox{generated by the}\ \Gamma^1_\lambda \\ ~\\
\cc\ell(N_2)\simeq\ \mbox{subalgebra of}\ \cc\ell(\car)\ \mbox{generated by the}\ \Gamma^2_\alpha
\end{array}
\right.
$$
as well as
$$
\cc\ell(\car)\simeq \cc\ell(N_1+N_2) .
$$

As a consequence of having a basis from the previous proposition, one has the following.
\begin{coro}
The Clifford algebra $\cc\ell(\car)$ being defined with relations \eqref{aldecl} for its generators $\Gamma$'s, the relations \eqref{alde} for the algebra $\car$ are equivalent to \eqref{gsxs}, that is one has the relations \eqref{alde} for the $x$'s if and only if  $(\Gamma(x))^2 = \II \otimes \|x\|^2$. 
\end{coro}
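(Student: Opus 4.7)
The forward direction is already established by the two preceding propositions, so the task reduces to the converse: assuming only the Clifford relations \eqref{aldecl} on the $\Gamma$'s, show that $(\Gamma(x))^2 = \II \otimes \|x\|^2$ forces the commutation relations \eqref{alde} on the $x$'s. The plan is to expand the square in $\cc\ell(\car) \otimes \car$, reorganize it into a sum over a PBW basis of degree-two monomials in the $\Gamma$'s, and read off the relations as the vanishing of each coefficient.

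First I would write
\[
(\Gamma(x))^2 = \Gamma^1_\lambda \Gamma^1_\mu \otimes x_1^\lambda x_1^\mu + \Gamma^2_\alpha \Gamma^2_\beta \otimes x_2^\alpha x_2^\beta + \Gamma^1_\lambda \Gamma^2_\alpha \otimes x_1^\lambda x_2^\alpha + \Gamma^2_\beta \Gamma^1_\mu \otimes x_2^\beta x_1^\mu .
\]
In the first sum, pair terms $(\lambda,\mu)$ and $(\mu,\lambda)$ and use $\Gamma^1_\lambda\Gamma^1_\mu + \Gamma^1_\mu\Gamma^1_\lambda = 2\delta_{\lambda\mu}\II$ to rewrite it as $\II \otimes \|x_1\|^2 + \sum_{\lambda < \mu} \Gamma^1_\lambda\Gamma^1_\mu \otimes (x_1^\lambda x_1^\mu - x_1^\mu x_1^\lambda)$, and similarly for the $\Gamma^2$ part. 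In the mixed sum, rewrite $\Gamma^2_\beta\Gamma^1_\mu = -R^{\lambda\alpha}_{\beta\mu}\Gamma^1_\lambda\Gamma^2_\alpha$ via the last relation of \eqref{aldecl}, producing $\Gamma^1_\lambda\Gamma^2_\alpha \otimes (x_1^\lambda x_2^\alpha - R^{\lambda\alpha}_{\beta\mu} x_2^\beta x_1^\mu)$.

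Setting the whole expression equal to $\II \otimes \|x\|^2 = \II \otimes (\|x_1\|^2 + \|x_2\|^2)$ cancels the scalar contribution and leaves
\[
\sum_{\lambda < \mu} \Gamma^1_\lambda\Gamma^1_\mu \otimes C^{\lambda\mu} + \sum_{\alpha < \beta} \Gamma^2_\alpha\Gamma^2_\beta \otimes D^{\alpha\beta} + \sum_{\lambda,\alpha} \Gamma^1_\lambda\Gamma^2_\alpha \otimes E^{\lambda\alpha} = 0
\]
in $\cc\ell(\car) \otimes \car$, where $C^{\lambda\mu}$, $D^{\alpha\beta}$, $E^{\lambda\alpha}$ are exactly the elements of $\car$ whose vanishing is the content of \eqref{alde}. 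The crux is now linear independence: by Proposition~\ref{PBW}, $\mathrm{gr}(\cc\ell(\car)) \simeq \car^!$, so the degree-two monomials $\Gamma^1_\lambda\Gamma^1_\mu$ ($\lambda < \mu$), $\Gamma^2_\alpha\Gamma^2_\beta$ ($\alpha < \beta$) and $\Gamma^1_\lambda\Gamma^2_\alpha$ are linearly independent in $\cf^2/\cf^1$, and in particular they project to a linearly independent family when tensored with arbitrary coefficients in $\car$. Hence each of $C^{\lambda\mu}$, $D^{\alpha\beta}$, $E^{\lambda\alpha}$ must vanish, which is precisely \eqref{alde}.

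The only real subtlety is the appeal to PBW: one has to be careful that we are not implicitly assuming the relations \eqref{alde} already hold in $\car$ (otherwise the argument would be circular). The point is that Proposition~\ref{PBW} gives a structural fact about $\cc\ell(\car)$ alone, namely that its filtered associated graded is $\car^!$, so the linear independence of the degree-two $\Gamma$-monomials is intrinsic to $\cc\ell(\car)$ and survives tensoring with any algebra, in particular with the free algebra in the $x$'s modulo the as-yet-unverified relations. This is the step I expect to be the main obstacle to write cleanly, but it is conceptually routine once the PBW theorem is in hand.
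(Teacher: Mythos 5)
Your proposal is correct and takes essentially the same approach as the paper. The paper's proof is just the remark "As a consequence of having a basis from the previous proposition," i.e.\ precisely the appeal to Proposition~\ref{PBW} to deduce linear independence of the degree-two $\Gamma$-monomials and then read off the relations \eqref{alde} from the vanishing of each coefficient; your careful handling of the potential circularity (the PBW structure of $\cc\ell(\car)$ is intrinsic, depending only on $R$, and is not contingent on which algebra the $x$'s live in) is exactly the point the paper implicitly relies on.
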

 
\section{Families of noncommutative euclidean planes} 
Before we proceed we have a look at the four-dimensional case.

\subsection{The four dimensional case}\label{se:4d}
A one-parameter family of noncommutative two-dimensional complex spaces $\Ct$ was introduced in \cite{cl01}. With $\theta\in \IR$, the coordinate unital $*$-algebra of $\Ct$
is generated by two normal elements $z_1, z_2$, that is 
$$
[z_1, z^*_1]=0=[z_2, z^*_2], 
$$
with relations:
$$
z_1 z_2 = e^{\ii \theta} z_2 z_1 , \qquad z_1 z^*_2 = e^{- \ii \theta} z^*_2 z_1 ,
$$
together with their $*$-conjugates. One easily checks that both $z_1 z^*_1$ and $z_2 z^*_2$ are in the center of the algebra. Sending $\theta \to- \theta$ results into an isomorphic algebra at the expenses of exchanging 
$z_1 \leftrightarrow z_2$. Thus this family is really parametrised by 
$\IS^1 / \IZ_2 \simeq \IS^1$. 

One can pass to a noncommutative four-dimensional euclidean spaces 
$\Ct \simeq \Rt$ via hermitian generators $(x_1^1, x_1^2)$ and $(x_2^1, x_2^2)$
by writing
$$
z_1 = x_1^1 + \ii \, x_1^2 , \qquad z_2 = x_2^1 + \ii \, x_2^2 .
$$
Then the algebra relations are easily found to be given by 
\begin{align*}
& x_1^1 x_1^2 = x_1^2 x_1^1 , \qquad x_2^1 x_2^2 = x_2^2 x_2^1  \nn \\
& x_1^\lambda x_2^\alpha = R^{\lambda \alpha}_{\beta \mu} \, x_2^\beta x_1^\mu ,
\end{align*}
with matrix $R^{\lambda \mu}_{\nu \rho}$ given explicitly by
\beq\label{r4d}
R^{\lambda \alpha}_{\beta \rho} = \cos\theta \, \delta^\lambda_\rho \delta^\alpha_\beta + 
\ii \sin\theta \, (C)^\lambda_\rho (D)^\alpha_\beta \qquad \mbox{and} \quad 
C = \begin{pmatrix} 
0 & - 1 \\
1 & 0 
\end{pmatrix}, \quad 
D = \begin{pmatrix} 
0 & 1 \\
- 1 & 0 
\end{pmatrix} . 
\eeq
In fact, in the notation of \eqref{ncp} we are really defining $\Rt$ as a noncommutative product 
$\Rt=\IR^2 \times_\theta \IR^2$. As for the matrices in \eqref{r4d}, what we have is that 
$$
C \otimes D = \sigma_2 \otimes \sigma_2 \qquad \mbox{with} \quad \sigma_2 =
\begin{pmatrix} 
0 & -\ii \\
\ii & 0 
\end{pmatrix}  
$$
the antisymmetric Pauli matrix. As mentioned before, mapping $\theta \to -\theta$ yields an isomorphic algebra now 
under the exchange $x_1^1 \leftrightarrow x_2^1$ and $x_1^2 \leftrightarrow x_2^2$. 

\subsection{The ansatz $ABCD$}
We try to generalise \eqref{r4d} in higher dimensions by looking for a matrix $R^{\lambda \mu}_{\nu \rho}$
in the defining relations \eqref{alde} of the form
\beq\label{r4dgen}
R^{\lambda \alpha}_{\beta \mu} 
= A^{\lambda}_ {\mu} \, B^{\alpha}_{\beta} + \ii \, C^{\lambda}_ {\mu} \, D^{\alpha}_{\beta} 
\eeq
for $N_1 \times N_1$ matrices $A,C$ and $N_2 \times N_2$ matrices $B,D$.

\begin{prop} 
The matrix $R^{\lambda \alpha}_{\beta \mu}$ of the form in \eqref{r4dgen} satisfies the reality condition \eqref{real} if and only if the matrices $A,B,C,D$ are such that
\beq\label{real8}
\overline{A} A \otimes \overline{B}  B + \overline{C}  C \otimes \overline{D}  D +
\ii \, \big( \overline{A} C \otimes \overline{B}  D - \overline{C}  A \otimes \overline{D}  B \big) = \II_{N_1} \otimes \II_{N_2}.
\eeq
being  \, $\bar{}$ \, complex conjugation. 
\begin{proof}
A direct computation of \eqref{real} yields
\beq
(\overline{A} A)_\tau^\lambda \, (\overline{B}  B)_\gamma^\alpha + (\overline{C}  C)_\tau^\lambda \, 
(\overline{D}  D)_\gamma^\alpha + 
\ii \, \Big[ (\overline{A} C)_\tau^\lambda \, (\overline{B}  D)_\gamma^\alpha  - 
(\overline{C}  A)_\tau^\lambda \, (\overline{D}  B)_\gamma^\alpha \Big]
=  \delta_\tau^\lambda \delta_\gamma^\alpha
\eeq
which is just the condition \eqref{real8} in components. 
\end{proof}
\end{prop}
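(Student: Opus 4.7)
The proof is a direct computation, and the plan is to unwind the reality condition \eqref{real} on the ansatz \eqref{r4dgen} and read off \eqref{real8} term by term.

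First, I would write out the complex conjugate of the ansatz, namely
$\overline{R}^{\lambda\alpha}_{\beta\mu}= \overline{A}^{\lambda}_{\mu}\overline{B}^{\alpha}_{\beta}- \ii\, \overline{C}^{\lambda}_{\mu}\overline{D}^{\alpha}_{\beta}$, and then form the product $\overline{R}^{\lambda\alpha}_{\beta\mu} R^{\mu\beta}_{\gamma\nu}$ with the paired contractions on $\mu$ and on $\beta$. The key observation is that because $R$ factorises as a sum of two simple tensors in the $(\lambda,\mu)$ vs.\ $(\alpha,\beta)$ index pairs, each contraction splits into a product of one contraction over $\mu$ on the $N_1$-side and one over $\beta$ on the $N_2$-side. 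Thus expanding the product of two binomials yields four terms, each a tensor product of an $N_1\times N_1$ matrix with an $N_2\times N_2$ matrix.

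Next I would group these four terms according to whether they are real (no $\ii$) or imaginary (one factor of $\ii$). The two real contributions are $(\overline{A}A)^{\lambda}_{\nu}(\overline{B}B)^{\alpha}_{\gamma}$ and $(\overline{C}C)^{\lambda}_{\nu}(\overline{D}D)^{\alpha}_{\gamma}$, while the imaginary ones combine into $\ii\bigl[(\overline{A}C)^{\lambda}_{\nu}(\overline{B}D)^{\alpha}_{\gamma}-(\overline{C}A)^{\lambda}_{\nu}(\overline{D}B)^{\alpha}_{\gamma}\bigr]$. Reassembling, the identity \eqref{real} becomes, in components,
\begin{equation*}
(\overline{A}A)^{\lambda}_{\nu}(\overline{B}B)^{\alpha}_{\gamma}+(\overline{C}C)^{\lambda}_{\nu}(\overline{D}D)^{\alpha}_{\gamma}+\ii\bigl[(\overline{A}C)^{\lambda}_{\nu}(\overline{B}D)^{\alpha}_{\gamma}-(\overline{C}A)^{\lambda}_{\nu}(\overline{D}B)^{\alpha}_{\gamma}\bigr]=\delta^{\lambda}_{\nu}\delta^{\alpha}_{\gamma},
\end{equation*}
which is precisely \eqref{real8} written with explicit matrix entries. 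Conversely, since the factorisation of the expansion is an identity of polynomials in the matrix entries, the equivalence runs in both directions.

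The only real obstacle is bookkeeping: one must be careful that the contraction on $\mu$ pairs an upper index on $\overline{A}$ or $\overline{C}$ with a lower index on $A$ or $C$ in the correct order to yield genuine matrix products $\overline{A}A$, $\overline{A}C$, $\overline{C}A$, $\overline{C}C$, and similarly for the $\beta$ contraction on the $B,D$ side. Once the indices are tracked, no further structural input on the matrices $A,B,C,D$ is needed, so the proposition follows as a straight expansion.
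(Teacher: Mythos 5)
Your proof is correct and follows essentially the same route as the paper: expand $\overline{R}^{\lambda\alpha}_{\beta\mu}R^{\mu\beta}_{\gamma\nu}$ using the $ABCD$ ansatz, observe that the contractions on $\mu$ and $\beta$ split into matrix products on the $N_1$ and $N_2$ sides respectively, and collect the four resulting terms to read off the component form of \eqref{real8}. The bookkeeping point you flag (that the upper/lower index placement actually produces $\overline{A}A$, $\overline{A}C$, etc., rather than reversed products) is exactly the one subtlety, and you handle it correctly.
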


\begin{prop}\label{pro-comm}
The matrix $R^{\lambda \alpha}_{\beta \mu}$ of the form in \eqref{r4dgen} satisfies the conditions in \eqref{eucl1} if and only if the matrices $A,B,C,D$ are such that
$$
[A, C] = 0 = [B,D] .
$$
\begin{proof}
Starting with the first condition in \eqref{eucl1} a direct computation yields
$$
(AC - CA)^\lambda_\rho \,\, B^\alpha_\beta D^\delta_\gamma + 
(CA - AC)^\lambda_\rho \,\, B^\delta_\gamma D^\alpha_\beta = 0
$$
which is then equivalent to the vanishing of the commutator $[A,C]=0$. 
The condition $[B, D] = 0$ follows in a similar fashion from the second condition in \eqref{eucl1}. 
\end{proof}
\end{prop}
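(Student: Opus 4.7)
The strategy is to substitute the ansatz $R^{\lambda\alpha}_{\beta\mu} = A^{\lambda}_{\mu} B^{\alpha}_{\beta} + \ii\, C^{\lambda}_{\mu} D^{\alpha}_{\beta}$ directly into each of the two relations in \eqref{eucl1} and read off the resulting constraints on the matrices $A,B,C,D$.

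For the first relation, $R^{\lambda\alpha}_{\beta\rho} R^{\rho\delta}_{\gamma\mu} = R^{\lambda\delta}_{\gamma\rho} R^{\rho\alpha}_{\beta\mu}$, I would expand each side into four monomials in $A,B,C,D$. Contracting over the first-family index $\rho$ produces diagonal contributions $(A^{2})^\lambda_\mu$ and $(C^{2})^\lambda_\mu$ multiplied by scalar products of $B$-entries or of $D$-entries, together with mixed contributions $(AC)^\lambda_\mu$ and $(CA)^\lambda_\mu$ multiplied by products of one $B$-entry and one $D$-entry. Since the matrix entries are just numbers, the diagonal contributions cancel identically between the two sides. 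The mixed contributions collapse, after grouping the $(AC)$ and $(CA)$ pieces, to the single identity
\[
\ii\,[A,C]^\lambda_\mu\,\bigl(B^\alpha_\beta D^\delta_\gamma - B^\delta_\gamma D^\alpha_\beta\bigr) = 0 .
\]
Generically (that is, when $B$ and $D$ are not proportional, so that the antisymmetric tensor $B\otimes D - D\otimes B$ does not vanish) this forces $[A,C]=0$; the converse is immediate because $[A,C]=0$ makes the displayed expression identically zero.

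The second relation of \eqref{eucl1} is handled in exactly the same way, the contraction now running over a second-family index $\gamma$. The same diagonal/cross-term separation gives
\[
\ii\,\bigl(A^\lambda_\nu C^\mu_\rho - A^\mu_\rho C^\lambda_\nu\bigr)\, [B,D]^\alpha_\beta = 0 ,
\]
which, under the dual genericity assumption that $A$ and $C$ are not proportional, is equivalent to $[B,D] = 0$.

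The only non-routine aspect of the argument is the bookkeeping: six Greek indices distributed across two different ranges must be tracked carefully during each substitution, with particular care taken to distinguish the summation indices from the free ones and to keep the first-family index positions from being confused with second-family ones. Once that is done, the rest is purely algebraic, and neither the previously derived reality condition \eqref{real8} nor the full Yang--Baxter system \eqref{yb1}--\eqref{yb2} plays any role in this particular reduction.
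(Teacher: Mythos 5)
Your proposal is correct and follows essentially the same route as the paper: substitute the ansatz $R^{\lambda\alpha}_{\beta\mu}=A^\lambda_\mu B^\alpha_\beta+\ii\,C^\lambda_\mu D^\alpha_\beta$ into each relation of \eqref{eucl1}, observe that the $A^2$ and $C^2$ terms cancel, and reduce the cross terms to $[A,C]$ times the antisymmetrized $B\otimes D$ tensor (and dually $[B,D]$ for the second relation). Your explicit genericity caveat (that $B\otimes D-D\otimes B\neq 0$, resp.\ $A\otimes C-C\otimes A\neq 0$, is needed for the ``only if'' direction) is a point the paper's proof passes over silently, so if anything you are slightly more careful than the original.
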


\begin{prop}\label{solgen}
Let $A$ and $C$ be the two commuting real $N_1 \times N_1$ matrices with $A$
symmetric and $C$ antisymmetric and let $B$ and $D$ be two
commuting real $N_2 \times N_2$ matrices with $B$ symmetric and $D$
antisymmetric: 
\beq\label{consgen}
[A, C] = 0 = [B,D] ,
\eeq
\begin{align}\label{matgen}
{}^t A = A = \overline{A} , \qquad
{}^t B = B = \overline{B}  , \qquad
- \, {}^t C = C = \overline{C}  , \qquad
- \, {}^t D = D = \overline{D}  , 
\end{align}
Assume in addition that 
\begin{align}\label{centgen}
A^2 \otimes B^2 + C^2 \otimes D^2 = \II_{N_1} \otimes \II_{N_2}.
\end{align}
Then the matrix $R^{\lambda \alpha}_{\beta \mu}$ of the form 
\beq\label{rgen}
R^{\lambda \alpha}_{\beta \mu} 
= A^{\lambda}_ {\mu} \, B^{\alpha}_{\beta} + \ii \, C^{\lambda}_ {\mu} \, D^{\alpha}_{\beta} 
\eeq
satisfies conditions \eqref{eucl1} as well as conditions \eqref{eucl0}.
\end{prop}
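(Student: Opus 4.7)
The plan is to verify the three identities packed into \eqref{eucl0} and the two reduced Yang--Baxter identities in \eqref{eucl1} by direct substitution of the ansatz \eqref{rgen}, using each hypothesis exactly once. I will not attempt to introduce any conceptual detour: because the matrix is an explicit sum of two tensor products and all entries are complex numbers (hence commute as scalars), every identity reduces to a relation between the \emph{operators} $A,B,C,D$, which is then read off from the hypotheses.

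First I would establish the reality condition \eqref{real}, or equivalently its ansatz version \eqref{real8} from the previous proposition. Since $A,B,C,D$ are real by \eqref{matgen}, the cross terms in \eqref{real8} take the form $i(AC\otimes BD-CA\otimes DB)$, which vanishes by the commutation hypotheses \eqref{consgen}. The diagonal terms reduce to $A^2\otimes B^2+C^2\otimes D^2$, which equals $\II_{N_1}\otimes\II_{N_2}$ by the centrality assumption \eqref{centgen}. Hence \eqref{real} holds, and in particular $R$ is invertible with $(R^{-1})^{\beta\mu}_{\lambda\alpha}=\overline{R}^{\mu\beta}_{\alpha\lambda}$.

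Next I would verify \eqref{eucl0}, namely the chain $R^{\lambda\beta}_{\alpha\mu}=R^{\mu\alpha}_{\beta\lambda}=(R^{-1})^{\beta\mu}_{\lambda\alpha}=\overline{R}^{\mu\beta}_{\alpha\lambda}$. Writing $R^{\mu\alpha}_{\beta\lambda}=A^\mu_\lambda B^\alpha_\beta+iC^\mu_\lambda D^\alpha_\beta$ and applying $A={}^tA$, $B={}^tB$, $C=-{}^tC$, $D=-{}^tD$, the two minus signs coming from $C$ and $D$ cancel and one recovers $R^{\lambda\beta}_{\alpha\mu}$. The last equality $R^{\lambda\beta}_{\alpha\mu}=\overline{R}^{\mu\beta}_{\alpha\lambda}$ is obtained similarly: complex conjugation flips the sign of the $i$, while the transposition pattern $(\mu,\alpha)\leftrightarrow(\lambda,\beta)$ on the second factor flips only $C^\mu_\lambda\to -C^\lambda_\mu$ (the index pair on $D$ is unchanged), giving the compensating sign. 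The middle equality with $R^{-1}$ was already dealt with in the previous step.

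Finally I would check the two surviving Yang--Baxter identities in \eqref{eucl1}. Expanding each product on the left and right of the first identity produces four terms of tensor-product type; three of them match trivially because the matrix entries of $B$ and $D$ are scalars, while the equality of the remaining mixed terms forces precisely $AC=CA$, i.e.\ $[A,C]=0$. The second identity \eqref{eucl1} works in the same way, this time reducing to $[B,D]=0$. Both conditions are hypotheses in \eqref{consgen}, completing the proof. The main (minor) obstacle is bookkeeping of indices and signs in the \eqref{eucl0} computation; everything else is a mechanical expansion with no hidden input beyond the four hypotheses \eqref{consgen}, \eqref{matgen}, \eqref{centgen} and the reality of $A,B,C,D$.
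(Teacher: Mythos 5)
Your proposal is correct and follows essentially the same route as the paper: reality of $A,B,C,D$ plus $[A,C]=[B,D]=0$ kill the imaginary part of \eqref{real8} so that \eqref{centgen} gives \eqref{real}, the commutators give the two identities \eqref{eucl1} (this is exactly Proposition~\ref{pro-comm}, which you re-derive inline rather than cite), and the (anti)symmetry/reality index computation gives \eqref{eucl0}, which the paper leaves as ``a direct computation''. The only difference is that you spell out the bookkeeping the paper delegates to the two preceding propositions.
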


\begin{proof}
From proposition \ref{pro-comm} the vanishing of the commutators \eqref{consgen} is equivalent to conditions \eqref{eucl1}. On the other hand, the vanishing of the commutator 
and all matrices being real, the `imaginary part' in \eqref{real8} vanishes and this condition reduces to 
\eqref{centgen}. Finally a direct computation also shows that the reality of the matrices and their 
(anti)symmetry implies that conditions \eqref{eucl0} are satisfied as well.
\end{proof}

\section{Quaternions and the four-dimensional euclidean space}\label{qr4}

We shall present explicit solutions for the matrix $R^{\lambda \alpha}_{\beta \mu}$ 
of the type in \eqref{rgen},
by using several results on the geometry of quaternions and related spaces that we present first.

\subsection{From quaternions to the euclidean space}
The space of quaternions $\IH$ is identified with $\IR^4$ in the usual way:
\beq\label{qrid}
\IH \ni q = x^0 e_0 + x^1 e_1 + x^2 e_2 + x^3 e_3 \quad \longmapsto \quad x = (x^\mu) = 
(x^0, x^1, x^2, x^3) \in \IR^4 .
\eeq
Here $e_0=1$ and the imaginary units $e_a$ obey the multiplication rule of the algebra $\IH$:
$$
e_a e_b = - \delta_{ab} + \sum_{c=1}^3\varepsilon_{abc} e_c .
$$
From this it follows an identification of the unit quaternions $\IH_1 = \{q \in \IH \, | \, q \bar q = 1 \} $ with the euclidean three-sphere $\IS^3 = \{x \in \IR^4 \, ; \, \|x\|^2 = \sum_\mu (x^\mu)^2 = 1\}$. 

With the identification \eqref{qrid}, left and right multiplication of quaternions are represented 
by matrices acting on $\IR^4$:  
$$
L_{q'} q := q'q \quad \to \quad E^+_{q'} (x) \qquad \mbox{and} \qquad 
R_{q'} q := qq' \quad \to \quad E^-_{q'} (x) .
$$
If $q$ is a unit quaternion, both $E^+_{q}$ and $E^-_{q}$ are orthogonal matrices. In fact
the unit quaternions form a subgroup of the multiplicative group $\IH^*$ of non vanishing quaternions.
When restricting to these, one has then:
\beq\label{su2}
\{ E^\pm_{q} \, | \, q \in \IH_1 \} \simeq \SU(2),  
\eeq
that is $E^+_{q}$ and $E^-_{q}$, for $q\in \IH_1$, are commuting $\SU(2)$ actions (each in the `defining representation')
on $\IR^4$, or taken together an action of $\SU(2)_L \times \SU(2)_R$ on $\IR^4$, with $L/R$ denoting left/right action.
This action is in fact the adjoint one, thus an action of $\SO(4) = \SU(2)_L \times \SU(2)_R / \IZ_2$. 

Let us denote $E^{\pm}_{a} = E^{\pm}_{e_a}$ for the imaginary units. 
By definition one has that 
\begin{align*}
E^{+}_{a} E^{-}_{b} = E^{-}_{b} E^{+}_{a} , \qquad  E^{\pm}_{a} E^{\pm}_{b} = - \delta_{ab} \II 
\pm \sum_{c=1}^3\varepsilon_{abc} E^{\pm}_{c} .
\end{align*}
In the following, it will turn out to be more convenient to change a sign to the `right' matrices: we shall rather use matrices 
$J^{+}_{a}:=E^{+}_{a}$ and $J^{-}_{a}:=-E^{-}_{a}$. For these one has
\begin{align*}
J^{+}_{a} J^{-}_{b} = J^{-}_{b} J^{+}_{a} , \qquad 
J^{\pm}_{a} J^{\pm}_{b} = - \delta_{ab} \II + \sum_{c=1}^3\varepsilon_{abc} J^{\pm}_{c} ,
\end{align*}
that is the matrices $J^\pm_a$ are two copies of the quaternionic imaginary units. 
With the identification \eqref{su2}, when acting on $\IR^4$, the matrices $J^\pm_a$ are a  representation of 
the Lie algebra $\su(2)$ of $\SU(2)$
or, taken together a representation of $\su(2)_L \oplus \su(2)_R$.  

Indeed, by the definition above one can explicitly compute the expressions
\beq
(J^{\pm}_{a})_{\mu\nu} = \mp ( \delta_{0\mu}  \delta_{a\nu} - \delta_{a\mu}  \delta_{0\nu})
+ \sum_{b,c=1}^3\varepsilon_{abc} \delta_{b\mu} \delta_{c\nu} , \quad \mbox{for} \quad a=1,2,3 ,
\eeq
for the components of the real $4 \times 4$  matrices $J^{\pm}_{a}$.

For the standard positive definite scalar product on $\IR^4$, the six matrices $J^{\pm}_{a}$ are readily checked to be antisymmetric, ${}^t J^{\pm}_{a} = - J^{\pm}_{a}$ and one finds in addition that 
$$
-\tfrac{1}{4} \tr (J^{\pm}_{a} J^{\pm}_{a}) =\delta_{ab} .
$$
On the other hand, the nine matrices $J^{+}_{a} J^{-}_{b}$ form an orthonormal basis for the space of symmetric trace-less matrices. In fact, the data $(\II, J^{+}_{a}, J^{-}_{a}, J^{+}_{a} J^{-}_{b})$ form an orthonormal basis of the endomorphisms algebra of $\IR^4$ (that is matrices) adapted to the decomposition 
\begin{align*}
\IR^4 \otimes \IR^4{}^* \simeq \IR^4{}^* \otimes \IR^4 & = D^{(0,0)} \oplus D^{(1,0)} \oplus D^{(0,1)} \oplus D^{(1,1)} \\
& = \IR \II  \oplus \Lambda^2_+ \IR^4{}^* \oplus   \Lambda^2_- \IR^4{}^* \oplus S^2_{(0)} \IR^4{}^* ,
\end{align*}
into irreducible $\SO(4)$-invariant subspaces. Here $S^2_{(0)} \IR^4{}^*$ is the space of trace-less elements of the degree-two part of the symmetric algebra over $\IR^4{}^*$, while the space $\Lambda^2 \IR^4{}^*$ of exterior two-forms (anti-symmetric two-vectors) on $\IR^4$ is split into a 
self-dual and an anti-self-dual part,
$\Lambda^2 \IR^4{}^* = \Lambda^2_+ \IR^4{}^* \oplus   \Lambda^2_- \IR^4{}^*$, where:
$$
\Lambda^2_\pm \IR^4{}^* = \{F \in \Lambda^2 \IR^4{}^* \, | \, \star F = \pm F \} \, \qquad  
(\star F)_{\mu \nu} = \tfrac{1}{2} \sum_{\rho, \sigma=1}^4 \varepsilon_{\mu \nu \rho \sigma} F_{\rho \sigma} .
$$ 
When equipping the space $\Lambda^2 \IR^4{}^*$ with the scalar product 
$(F | F') = \tfrac{1}{4} \sum_{\mu, \nu=1}^4 F_{\mu \nu} F'_{\mu \nu}$, the splitting 
$\Lambda^2 \IR^4{}^* = \Lambda^2_+ \IR^4{}^* \oplus \Lambda^2_- \IR^4{}^*$ is an orthogonal one into three-dimensional irreducible subspaces $\Lambda^2_\pm \IR^4{}^*$ and the representation of $\SO(4)$ on 
$\Lambda^2 \IR^4{}^* = \Lambda^2_+ \IR^4{}^* \oplus \Lambda^2_- \IR^4{}^*$ induces an homomorphism, 
\beq\label{act4}
\pi: \SO(4) \to \SO(3) \times \SO(3), 
\eeq
with kernel $\pm \II$ (that is each factor is in the adjoint action of $\SU(2)$).
More precisely, $(J^{\pm}_{1}, J^{\pm}_{2}, J^{\pm}_{3})$ is canonically an orthonormal basis of 
$\Lambda^2_\pm \IR^4{}^* \simeq \IR^3$ considered as an oriented three-dimensional euclidean space with the orientation of this basis; mapping $J^{\pm}_{a} \to J^{\mp}_{a}$ amounts to exchange the orientation. 
All the above will be quite useful in the following.

\section{Noncommutative quaternionic planes} 
We are ready to exhibit solutions in dimension eight for the matrices in the Proposition 
\ref{solgen} by using the results presented in \S\ref{qr4}. In particular, we recall again that the matrices $(J^{\pm}_{1}, J^{\pm}_{2}, J^{\pm}_{3})$ are canonically an orthonormal basis of $\IR^3 \simeq \Lambda^2_\pm \IR^4{}^*$. Based on this, with a vector 
$\underbar{u} = (u^1, u^2, u^3) \in \IR^3$ we get matrices 
$$
J^{+}_{\underbar{u}} := u^1 J^{+}_{1} + u^2 J^{+}_{2} + u^3 J^{+}_{3} \qquad \mbox{or} \qquad 
J^{-}_{\underbar{u}} := u^1 J^{-}_{1} + u^2 J^{-}_{2} + u^3 J^{-}_{3} .
$$

\subsection{Toric families} 
For matrices $A,B,C,D$ we take
$$
A = u \, \II, \quad B = \II , \qquad C = J^{\pm}_{\underbar{u}} , \quad D = J^{\mp}_{\underbar{v}}, 
$$
for $u\in \IR$ and $\underbar{u}, \underbar{v} \in \IR^3$. One easy sees that conditions \eqref{consgen} and \eqref{matgen} are satisfied and these matrices lead to the two families::
$$
(R^\pm)^{\lambda \alpha}_{\beta \mu} = u \, \delta^\lambda_\mu \delta^\alpha_\beta +
\ii \, (J^{\pm}_{\underbar{u}})^\lambda_\mu \, (J^{\mp}_{\underbar{v}})^\alpha_\beta, 
$$
Before we look at condition \eqref{centgen} we recall the action \eqref{act4} of $\SO(4)$ on 
$\Lambda^2 \IR^4{}^* \simeq \IR^3 \oplus \IR^3$. Using this action, one can always rotate $\underbar{u}, \underbar{v}$ to a common direction, $\widehat{\underbar{n}}$ say. The resulting matrices 
$(R^\pm)^{\lambda \mu}_{\nu \rho}$ are then written as
\beq\label{r8dtheta}
(R^\pm)^{\lambda \alpha}_{\beta \mu}  = u \, \delta^\lambda_\mu \delta^\alpha_\beta +
\ii \, v \, (J^{\pm}_{\widehat{\underbar{n}}})^\lambda_\mu \, (J^{\mp}_{\widehat{\underbar{n}}})^\alpha_\beta, 
\eeq
with real parameters $u,v \in \IR$ which, from condition \eqref{centgen}, are constrained by 
$$
u^2 + v^2 = 1. 
$$
Thus the matrices \eqref{r8dtheta} are direct generalisation of the four-dimensional one in \eqref{r4d} and the 
resulting associated noncommutative manifolds are just the toric manifolds of \cite{cl01} and \cite{cdv02}. 
As already observed in \S\ref{se:4d}, mapping $v \to -v$ would exchange $R^+$ with $R^-$ and results into isomorphic algebras; the space of parameters is indeed $\IS^1 / \IZ^2$. 

\subsection{Quaternionic families}\label{qdf}
More generally, for matrices $A,B,C,D$ we take
$$
A = u^0 \, \II, \quad B = \II , \qquad C = J^{\pm}_{\underbar{v}} , \quad D = J^{\pm}_{\underbar{u}}, 
$$
for $u^0\in \IR$ and $\underbar{u}, \underbar{v} \in \IR^3$, which again satisfy conditions \eqref{consgen} and \eqref{matgen}. These yield:
\beq\label{r8dquat0}
(R^\pm)^{\lambda \alpha}_{\beta \mu} = u^0 \, \delta^\lambda_\mu \delta^\alpha_\beta +
\ii \, (J^{\pm}_{\underbar{v}})^\lambda_\mu \, (J^{\pm}_{\underbar{u}})^\alpha_\beta . 
\eeq
Using the action of $\SO(3)$ one can now rotate only $\underbar{v}$ to a fixed direction 
$\widehat{\underbar{n}}$,
and in this case the matrix \eqref{r8dquat0} has parameters $u^0\in\IR$ and $\underbar{u}\in\IR^3$ 
constrained by the \eqref{centgen} to
$$
(u^0)^2 + \underbar{u}^2 = 1,
$$
that is they make up a three-dimensional sphere $\IS^3$.  There is in fact a residual `gauge' freedom in that one can use a rotation around the direction $\widehat{\underbar{n}}$ to remove one component of the vector $\underbar{u}$. Thus if $\widehat{\underbar{n}}_1$ and $\widehat{\underbar{n}}_2$ are two orthogonal unit vectors, we get families of noncommutative spaces determined by the matrices 
$$
(R^\pm)^{\lambda \alpha}_{\beta \mu} = u^0 \, \delta^\lambda_\mu \delta^\alpha_\beta +
\ii \, (J^{\pm}_{\widehat{\underbar{n}}_1})^\lambda_\mu \, (J^{\pm}_{\underbar{u}})^\alpha_\beta , \qquad \mbox{with} \quad \underbar{u} = u^1 \widehat{\underbar{n}}_1 + u^2 \widehat{\underbar{n}}_2  
$$
and parameters constrained by 
$$
(u^0)^2 + (u^1)^2 + (u^2)^2 = 1 ,
$$
that is a two-dimensional sphere $\IS^2 = \IS^3 / \IS^1$. 
Thus we have natural generalisations of the toric 
four-dimensional noncommutative spaces described in \S\ref{se:4d} for which the space of deformation parameters is $\IS^1 / \IZ_2$.

\subsection{Additional families} 
There are two additional classes of families which are not connected to the classical case. One could start with general parameters and with suitable rotations like the ones used in the previous cases, transform them in a simpler form. 

\subsubsection{Stratum I} 
With $\widehat{\underbar{n}}$ a unit vector and $u\in\IR$, consider matrices 
$$
C = u \, J^{\pm}_{\widehat{\underbar{n}}} , \qquad 
D = J^{\mp}_{\widehat{\underbar{n}}} .
$$
Matrices $C$ and $D$ commuting with $A$ and $B$ respectively, can then be taken to be
$$
A = J^{\pm}_{\widehat{\underbar{n}}} J^{\mp}_{\underbar{v}} , \qquad 
B = J^{\mp}_{\widehat{\underbar{n}}} J^{\pm}_{\underbar{w}}
$$
for vectors $\underbar{v}, \underbar{w} \in \IR^3$. For the corresponding matrices 
$$
(R^\pm)^{\lambda \alpha}_{\beta \mu} = 
(J^{\pm}_{\widehat{\underbar{n}}} J^{\mp}_{\underbar{v}})^\lambda_\mu \, 
(J^{\mp}_{\widehat{\underbar{n}}} J^{\pm}_{\underbar{w}})^\alpha_\beta \, + \, \ii \, u \, (J^{\pm}_{\widehat{\underbar{n}}})^\lambda_\mu 
(J^{\mp}_{\widehat{\underbar{n}}})^\alpha_\beta \, , 
$$
the condition \eqref{centgen} yields the constraint:
$$
(\underbar{v})^2  (\underbar{w})^2 + u^2 = 1 .
$$

\subsubsection{Stratum II} 
With $\widehat{\underbar{n}}_1$ and $\widehat{\underbar{n}}_2$ two orthogonal unit vectors, consider matrices 
$$
C = J^{\pm}_{\widehat{\underbar{n}}_1} , \qquad 
D = J^{\pm}_{\underbar{u}} \qquad \mbox{with} 
\quad \underbar{u} = u^1 \widehat{\underbar{n}}_1 + u^2 \widehat{\underbar{n}}_2 .
$$
Matrices $C$ and $D$ commuting with $A$ and $B$ respectively, can then be taken to be
$$
A = J^{\pm}_{\widehat{\underbar{n}}_1} J^{\mp}_{\underbar{v}} , \qquad 
B = J^{\pm}_{\underbar{u}} J^{\mp}_{\underbar{w}}
$$
for vectors $\underbar{v}, \underbar{w} \in \IR^3$. 
For the corresponding matrices 
$$
(R^\pm)^{\lambda \alpha}_{\beta \mu} = 
(J^{\pm}_{\widehat{\underbar{n}}_1} J^{\mp}_{\underbar{v}})^\lambda_\mu \, 
(J^{\pm}_{\underbar{u}} J^{\mp}_{\underbar{w}})^\alpha_\beta \, + \, 
\ii t \, 
(J^{\pm}_{\widehat{\underbar{n}}_1} )^\lambda_\mu 
(J^{\pm}_{\underbar{u}})^\alpha_\beta \, , 
$$
the condition \eqref{centgen} yields the constraint:
$$
\big( (u^1)^2 + (u^2)^2 \big)\big( t^2 + (\underbar{w})^2 (\underbar{v})^2 \big) = 1 .
$$

\section{Tori and Spheres}
The deformed $*$-algebras we have defined in the previous section are coordinate algebras 
on `spaces' defined by duality, that we shall generically denote $\IR^8_{\bf u}$. 
We now concentrate on the noncommutative spaces that we have named quaternionic in \S\ref{qdf}. As it will emerge these come with a rich symmetry structure and are particularly well behaved.

The noncommutative plane $\IR^8_{\bf u}$ has coordinate algebra $\cau = \ca(\IR^8_{\bf u})$ generated by elements
$(x_1, x_2)=(x_1^\mu, x_2^\alpha)$, for $\mu,\alpha\in\{0,1, \dots, 3\}$ subject to relations:
\begin{align}\label{r8+-}
x_1^\lambda x_1^\mu & = x_1^\mu x_1^\lambda , \qquad x_2^\alpha x_2^\beta = x_2^\beta 
x_2^\alpha , \nn \\
x_1^\lambda x_2^\alpha & = R^{\lambda \alpha}_{\beta \rho} \, x_2^\beta x_1^\rho , \nn \\
& = u^0 \, x_2^\alpha x_1^\lambda + \ii \, (J^{\pm}_{\underbar{u}})^\alpha_\beta x_2^\beta 
\, (J^{\pm}_{\widehat{\underbar{n}}_1})^\lambda_\rho x_1^\rho \nn \\
& = u^0 \, x_2^\alpha x_1^\lambda + \ii \, J^{\pm}_{\underbar{u}}(x_2)^\alpha 
\, J^{\pm}_{\widehat{\underbar{n}}_1}(x_1)^\lambda ,
\end{align}
with parameter ${\bf u} = (u^0, u^1, u^2) \in \IS^2$. In our construction a prominent role is played by the quaternionic geometry as illustrated in \S\ref{qr4}. 
From the way the two noncommutative algebras $\cau$ have been constructed we may as well think of each of them as being the coordinate 
algebra of two noncommutative copies of $\IH$, that is of an isomorphism
$$
\IR^8_{\bf u} \simeq \IH \times_{\bf u} \IH . 
$$
In fact the two algebras $\cau$ are isomorphic with isomorphism 
$\capo \to \cane$ given by 
$$
(x_1^0, x_1^1, x_1^2, x_1^3) \mapsto (x_1^0, -x_1^1, -x_1^2, -x_1^3) , 
\quad  (x_2^0, x_2^1, x_2^2, x_2^3) \mapsto (x_2^0, -x_2^1, -x_2^2, -x_2^3). 
$$
This map exchanges the orientation in each of the two copies which, from the considerations at the end of \S\ref{qr4}, is the same as the exchange 
$J^{\pm}_{a} \to J^{\mp}_{a}$.

\subsection{Noncommutative quaternionic tori} 
As it is well known, there is an identification of the unit quaternions $\IH_1$
with the euclidean three-sphere $\IS^3$. Now, if $\IC_1$ denotes the unit complex number, 
the two-dimensional torus is written as $\IT^2 = \IC_1 \times \IC_1 = \IS^1 \times \IS^1$. 
Then, in analogy with this, and for lack of a better name, we shall call a \emph{quaternionic torus} the cartesian product 
of two copies of $\IH_1$, that is 
$$
\IT^{\IH} = \IH_1 \times \IH_1 \simeq \IS^3 \times \IS^3. 
$$ 

And in analogy with the passage from the torus $\IT^2$ to the noncommutative one $\IT^2_\theta$ which is behind the noncommutative four-dimensional planes described in \S\ref{qr4}, we can pass from the torus 
$\IT^{\IH}$ to a noncommutative quaternionic torus 
$$
\IT^{\IH}_{\bf u} = \IH_1 \times_{\bf u} \IH_1 = \IS^3 \times_{\bf u} \IS^3 \qquad \mbox{for} \quad {\bf u} \in \IS^2. 
$$ 
The space $\IT^{\IH}_{\bf u}$ is defined by duality as having coordinate algebra 
$$
\ca(\IT^{\IH}_{\bf u}) = \ca(\IR^8_{\bf u}) /< \|x_1\|^2 - \II, \|x_2\|^2 - \II> , 
$$
that is the quotient of the algebra given by the relations \eqref{r8+-}, modulo the ideal generated 
by the conditions $\|x_1\|^2 = 1 = \|x_2\|^2$. The quotient is well defined since the quadratic elements 
$\|x_1\|^2 = \sum_\mu (x_1^\mu)^2$ and $\|x_2\|^2 = \sum_\alpha (x_2^\alpha)^2$ are both central.  

\subsection{Noncommutative seven-spheres} 
The quadratic element $\|x\|^2 = \|x_1\|^2 + \|x_2\|^2$ is central as well. We get a noncommutative 
seven-spheres $\IS^{7}_{\bf u}$ by the quotient:
$$
\ca(\IS^{7}_{\bf u}) = \ca(\IR^8_{\bf u}) /< \|x\|^2 - \II > .
$$
Again we may think of these spheres as being $\IS^{7}_{\bf u} \simeq (\IH^2_1)_{\bf u}$.

\subsection{An $\SU(2) \times \SU(2)$ symmetry} 
We know from \S\ref{qr4} that for $q\in\IH_1$, a unit quaternion, the matrices $J^\pm_q$ 
are a faithful representation of $\SU(2)_R$ or $\SU(2)_L$, with $J^+_q$ giving the right action 
while $J^-_q$ the left one.  Since the matrices commute, $J^{+}_{q} J^{-}_{q'} = J^{-}_{q'} J^{+}_{q}$, 
the relations in \eqref{r8+-} for $\capo$ are invariant under the right $\SU(2)\times\SU(2)$ action given by
\beq
x_1^\lambda \to  (J^{-}_{q_1})^\lambda_\nu \, x_1^\nu , \qquad 
x_2^\alpha \to  (J^{-}_{q_2})^\alpha_\beta \, x_2^\beta , \qquad \mbox{for} \quad q_1, q_2 \in\IH_1 ,
\eeq
while relations in \eqref{r8+-} for $\cane$ are invariant under the left $\SU(2)\times\SU(2)$ action given by
\beq
x_1^\lambda \to  (J^{+}_{q_1})^\lambda_\nu \, x_1^\nu , \qquad 
x_2^\alpha \to  (J^{+}_{q_2})^\alpha_\beta \, x_2^\beta   , \qquad \mbox{for} \quad q_1, q_2 \in\IH_1 .  
\eeq

\appendix

\section{Quadratic algebras}\label{quad}

To be definite we take the ground field to be complex numbers $\IC$. 
A homogeneous \emph{quadratic algebra} \cite{ma88}, \cite{pp05} 
is an associative algebra $\ca$ of the form
$$
\ca=A(E,R)=T(E)/(R)
$$
with $E$ a finite-dimensional vector space, $R$ a subspace of $E\otimes E$ and $(R)$ denoting the two-sided ideal of the tensor algebra $T(E)$ over $E$ generated by $R$. The space $E$ is the space of generators of $\ca$ and the subspace $R$ of $E\otimes E$ is the space of relations of $\ca$. The algebra $\ca=A(E,R)$ is naturally a graded algebra $\ca=\bigoplus_{n\in \mathbb N}\ca_n$ which is connected, that is such that $\ca_0=\IC \II$ and generated by the degree 1 part, $\ca_1=E$.

To a quadratic algebra $\ca=A(E,R)$ as above one associates another quadratic algebra, \emph{its Koszul dual} $\ca^!$, defined by
$$
\ca^!=A(E^\ast,R^\perp)
$$
where $E^\ast$ denotes the dual vector space of $E$ and $R^\perp\subset E^\ast \otimes E^\ast$ is the orthogonal of the space of relations $R\subset E\otimes E$ defined by 
$$
R^\perp = \{\omega \in E^\ast\otimes E^\ast \, ; \, \langle\omega,r\rangle=0,\forall r\in R\} .
$$
As usual, by using the finite-dimensionality of $E$, one identifies $E^\ast \otimes E^\ast$ with the dual vector space $(E\otimes E)^\ast$ of $E\otimes E$. One has of course $(\ca^!)^!=\ca$
and the dual vector spaces $\ca^{!\ast}_n$ of the homogeneous components $\ca^!_n$ of $\ca^!$ are 
\beq
\ca^{!\ast}_1=E \qquad 
\mbox{and} \qquad 
\ca^{!\ast}_n=\bigcap_{r+s+2=n} E^{\otimes^r}\otimes R \otimes E^{\otimes^s}
\label{En}
\eeq
for $n\geq 2$, as easily verified. In particular $\ca^{!\ast}_2=R$ and $\ca^{!\ast}_n\subset E^{\otimes^n}$ for any $n\in \mathbb N$.

Consider the sequence of free left $\ca$-modules
\begin{equation}
K(\ca): \qquad\quad \cdots \stackrel{b}{\rightarrow}\ca\otimes \ca^{!\ast}_{n+1}\stackrel{b}{\rightarrow}\ca\otimes \ca^{!\ast}_n\rightarrow \dots \rightarrow \ca\otimes \ca^{!\ast}_2\stackrel{b}{\rightarrow}\ca\otimes E\stackrel{b}{\rightarrow}\ca\rightarrow 0
\label{K}
\end{equation}
where $b:\ca\otimes \ca^{!\ast}_{n+1}\rightarrow \ca\otimes \ca^{!\ast}_n$ is induced by the left $\ca$-module homomorphism of $\ca\otimes E^{\otimes^{n+1}}$ into $\ca\otimes E^{\otimes^n}$ defined by
$$
b(a\otimes (x_0\otimes x_1\otimes \dots \otimes x_n))=ax_0 \otimes (x_1\otimes \dots \otimes x_n)
$$
for $a\in \ca$, $x_k \in E$. It follows from \eqref{En} that $\ca^{!\ast}_n\subset R\otimes E^{\otimes^{n-2}}$ for $n\geq 2$, which implies that $b^2=0$. As a consequence, the sequence $K(\ca)$ in \eqref{K} is a chain complex of free left $\ca$-modules called the \emph{Koszul complex} of the quadratic algebra $\ca$. 
The quadratic algebra $\ca$ is said to be a \emph{Koszul algebra} whenever its Koszul complex is acyclic in positive degrees, that is, whenever $H_n(K(\ca))=0$ for $n\geq 1$. One shows easily that $\ca$ is a Koszul algebra if and only if its Koszul dual $\ca^!$ is a Koszul algebra.

It is important to realize that the presentation of $\ca$ by generators and relations is equivalent to the exactness of the sequence
$$
\ca\otimes R \stackrel{b}{\rightarrow} \ca\otimes E \stackrel{b}{\rightarrow}\ca \stackrel{\varepsilon}{\rightarrow} \IC\rightarrow 0
$$
with $\varepsilon$ the map induced by the projection onto degree $0$. Thus one always has
$$
H_1(K(\ca))=0 \qquad \mbox{and} \qquad H_0(K(\ca))=\IC
$$
and, whenever $\ca$ is Koszul, the sequence
$$
K(\ca)\stackrel{\varepsilon}{\rightarrow} \IC \rightarrow 0 ,
$$
is a free resolution of the trivial module $\IC$. This resolution is then a minimal projective resolution of $\IC$ in the category of graded modules \cite{ca58}. 

Let $\ca=A(E,R)$ be a quadratic Koszul algebra such that $\ca^!_D\not=0$ and $\ca^!_n=0$ for $n>D$. 
Then the trivial (left) module $\IC$ has projective dimension $D$ which implies that $\ca$ has global dimension $D$ (see \cite{ca58}). This also implies that the Hochschild dimension of $\ca$ is $D$ (see \cite{be05}). 
By applying the functor $\Hom_\ca(\, \cdot \, , \ca)$ to the Koszul chain complex $K(\ca)$ of left $\ca$-modules one obtains the cochain complex $L(\ca)$ of right $\ca$-modules 
$$
L(\ca) : \qquad \quad 0\rightarrow \ca\stackrel{b'}{\rightarrow} \dots\stackrel{b'}{\rightarrow}\ca^!_n \otimes \ca \stackrel{b'}{\rightarrow} \ca^!_{n+1} \otimes \ca \stackrel{b'}{\rightarrow} \cdots \, .
$$
Here $b'$ is the left multiplication by $\sum_k\theta^k\otimes e_k$ in $\ca^!\otimes \ca$ where $(e_k)$ is a basis of $E$ with dual basis $(\theta^k)$. The algebra $\ca$ is said to be \emph{Koszul-Gorenstein} if it is Koszul of finite global dimension $D$ as above and if $H^n(L(\ca))=\IC \, \delta^n_D$. Notice that this implies that $\ca^!_n\simeq \ca^{!\ast}_{D-n}$ as vector spaces (this is a version of Poincar\'e duality).

Finally, a graded algebra $\ca=\oplus_n \ca_n$ is said to have \emph{polynomial growth} whenever there are a positive $C$ and $N \in \IN$ such that, for any $n \in \IN$, 
$$
\dim (\ca_n)\leq C n^{N-1} .
$$

\medskip
As before, let $E$ be a finite-dimensional vector space with the tensor algebra $T(E)$ 
endowed with its natural filtration $F^n(T(E))=\bigoplus_{m\leq n} E^{\otimes^m}$.
A {\sl nonhomogeneous quadratic algebra} \cite{po93}, \cite{bg96}, 
is an algebra $\fraca$ of the form
$$
\fraca=A(E,P)=T(E)/(P)
$$
where $P$ is a subspace of $F^2(T(E))$ and where $(P)$ denotes as above the two-sided ideal of $T(E)$
generated by $P$. The filtration of the tensor algebra $T(E)$ induces a filtration $F^n(\fraca)$ of $\fraca$ and one associates to $\fraca$ the graded algebra
$$
\mbox{gr} (\fraca)=\oplus_n F^n(\fraca)/F^{n-1}(\fraca) .
$$
Let $R$ be the image of $P$ under the canonical projection of $F^2(T(E))$ onto $E\otimes E$ and let $\ca=A(E,R)$ be the homogeneous quadratic algebra $T(E)/(R)$; this $\ca$ is referred to as the {\sl quadratic part} of $\fraca$. There is a canonical surjective graded algebra homomorphism 
$$
\mbox{can} :\ca\rightarrow \mbox{gr}(\fraca) .
$$
One says that $\fraca$ has the {\sl  Poincar\'e-Birkhoff-Witt (PBW) property} whenever this homomorphism is an isomorphism. The terminology  comes from the example 
where $\fraca=U(\fracg)$ the universal enveloping algebra of a Lie algebra $\fracg$.
A central result (see  \cite{bg96} and \cite{pp05}) states that if $\fraca$ has the PBW property then the following conditions are satisfied: 
\begin{align}\label{i-ii}
& \mathrm{(i)} \quad P\cap F^1 (T(E))=0 , \nn \\
& \mathrm{(ii)} \quad (P \cdot E + E \cdot P)\cap F^2 (T(E)) \subset P .
\end{align}
\noindent 
Conversely, if the quadratic part $\ca$ is a Koszul algebra, the conditions 
$\mathrm{(i)}$ and $\mathrm{(ii)}$ imply that $\fraca$ has the PBW property.
 Condition (i) means that $P$ is obtained from $R$ by adding to each non-zero element of $R$ terms of degrees 1 and 0. That is there are linear mappings $\psi_1:R\rightarrow E$ and $\psi_0:R
 \rightarrow \IC$ such that one has
\beq
P=\{r+\psi_1(r)+\psi_0(r) \II \,\, \vert \,\, r \in R\}
\label{PR}
\eeq
giving $P$ in terms of $R$. Condition (ii) is a generalisation of the Jacobi identity (see \cite{pp05}).


\end{document}